\newtheorem{theorem}{Theorem}[section]
\newtheorem{lemma}[theorem]{Lemma}
\newtheorem{proposition}[theorem]{Proposition}
\newtheorem{corollary}[theorem]{Corollary}
\theoremstyle{definition}
\newtheorem{definition}[theorem]{Definition}
\theoremstyle{remark}
\newtheorem{remark}[theorem]{Remark}
\numberwithin{equation}{section} \errorcontextlines=0
\newcommand{\Pf}{\mathrm{Pf}}
\newcommand{\cdet}{\mathrm{cdet}}
\newcommand{\rdet}{\mathrm{rdet}}
\newcommand{\ot}{\otimes}
\newcommand{\id}{\mathrm{id}}
\newcommand{\si}{\sigma}
\newcommand{\GL}{\mathrm{GL}}
\newcommand{\gl}{\mathfrak{gl}}
\begin{document}

\title[Capelli identity on multiparameter quantum linear groups]
{Capelli identity on multiparameter quantum linear groups}
\author{Naihuan Jing}
\address{NJ: Department of Mathematics, Shanghai University,
Shanghai 200444, China and Department of Mathematics, North Carolina State University, Raleigh, NC 27695, USA}
\email{jing@ncsu.edu}
\author{Jian Zhang}
\address{JZ: Institute of Mathematics and Statistics, University of Sao Paulo, Sao Paulo, Brazil 05315-970}
\email{zhang@ime.usp.br}
\thanks{{\scriptsize
\hskip -0.6 true cm MSC (2010): Primary: 17B37; Secondary: 58A17, 15A75, 15B33, 15A15.
\newline Keywords: multiparameter quantum groups, $q$-determinants, quasi-center, Capelli identity, $q$-Pfaffians, $q$-hyper-Pfaffians\\
Supported by NSFC (11531004), 
Fapesp (2015/05927-0) and Humboldt Foundation.}}
\thanks{{\scriptsize\hskip -0.6 true cm *Corresponding author}}

\begin{abstract}
A quantum Capelli identity is given on the multiparameter quantum general linear group
based on the $(p_{ij}, u)$-condition. The multiparameter quantum Pfaffian of the $(p_{ij}, u)$-quantum group is also introduced and the transformation under the congruent action is given. Generalization to the multiparameter hyper-Pfaffian and relationship with
the quantum minors are also investigated.
\end{abstract}
\maketitle
\section{Introduction}

In the study of quantum groups, multiparameter quantum groups and quantum enveloping algebras
were considered along the line of one-parameter quantum groups \cite{T, S, R, DD, DPW} and have been a
source for further developments. It was known that several of their important properties
are similar to the one-parameter analog, for example, Artin-Schelter-Tate \cite{AST} showed that the multiparameter
general linear quantum group has the same Hilbert function for the polynomial functions in $n^2$ variables under the
so-called $(p, \lambda)$-condition (see \eqref{e:cond-p} or $(p, u)$-condition in our notation).
Further results have been established for two-parameter and multi-parameter quantum groups \cite{Sc, BG, HLM, LS, JL}
such as the multiparameter quantum determinant, which converts quantum semigroups
into quantum groups.
Recently it is known that the quantum Pfaffians
can be extended to two-parameter quantum groups as well \cite{JZ2}.

In this paper, we study the generalization of the classical action of
$\GL_n(\mathbb C)\times \GL_n(\mathbb C)$ on the ring of
differential operators $\mathcal{P}(\mathrm{M}_n(\mathbb C))$, which provides
a complete set of generators for the center of $\mathrm{U}(\gl_n(\mathbb C))$. This problem
has been an inspiring example in the classical invariant theory \cite{W} and offers
one context for the celebrated Capelli identity and
Howe's dual pairs \cite{H}.  The quantum Capelli identity for Yangians was first given in \cite{N} and has played
an important role in the development of the Yangian algebra $Y(\gl_n)$ \cite{M}. The quantum analog for the quantum general linear group was
established in \cite{NUW} using Jimbo's central element in $\mathrm{U}_q(\gl_n)$.

As an interesting application of the multiparameter quantum determinant, we
formulate a generalized Capelli identity on the multiparameter general linear
group $\GL_{p, u}(n)$ by constructing a quasi-central element in the
dual algebra $\mathrm{U}_{p, u}(\gl_n)$ using the fusion method. The specialization of the Capelli identity
at equal parameters $p_{ij}=u$ matches with the quantum
Capelli identity given in \cite{NUW}.
We also generalize our recent study of quantum Pfaffians from
two-parameter quantum groups to the multiparameter case.

Using the general construction of Faddeev-Reshetikhin-Takhtajan \cite{FRT, RTF}, we construct a pair of Hopf algebras to deform
$\GL_n$ and $\mathrm{U}(\gl_n)$ for a multiparameter R-matrix. 
Our main technique is to use quadratic algebras \cite{M} to study quantum
determinants and quantum Pfaffians, and express them as the scaling constants of
quantum differential forms (cf. \cite{JZ1}).
In particular, we prove that the multiparameter quantum Pfaffian can be defined for a more general class of multiparameter quantum matrices
and derive an identity between the quantum determinant and Pfaffian. Moreover, we give
the integrality property for multiparameter quantum Pfaffian under the $(p_{ij}, u)$-conditions.

We also formulate the multiparameter quantum determinants in terms of the quasideterminant of the generating matrix.
Generalizing the one-form and two-form, we obtain higher degree analogs of the multiparameter Pfaffians and
establish the transformation rule of the multiparameter quantum hyper-Pfaffian under the quantum determinant,
which extends the fundamental transformation rule of Pfaffians in linear algebra.

The paper is organized as follows. In section two, we introduce the multiparameter quantum general linear group $\GL_{p, u}(n)$ via the
generalized quantum Yang-Baxter R-matrix and quantum determinant. We include the
details of multiparameter quantum determinant for latter generalization. 
In section three, we formulate the quasideterminant version of $\GL_{p, u}(n)$. In section four we define the dual Hopf algebra $\mathrm{U}_{p, u}(\gl_n)$ and factorize the matrix of the quantum root vectors
by that of quantum partial derivatives. Using a special quasi-central element of $\mathrm{U}_{p, u}(\gl_n)$, we derive the quantum
Capelli identity in the multiparameter setting. In section five, we introduce the notion of multiparameter quantum Pfaffian
and establish its integrality under $(p_{ij}, u)$-condition and obtain the fundamental transformation formula relating the quantum Pfaffian and determinant. Finally in section six, we study the multiparameter higher quantum Pfaffians using $q$-forms.

\section{The multiparameter quantum general linear group $\mbox{GL}_{p, u}(n)$}

Let $(p_{ij}, q_{ij})$ $(i<j)$ and $u$ be parameters satisfying the following condition \cite{AST}:
\begin{equation}\label{e:cond-p}
p_{ij}q_{ij}=u^2, \quad u^2\neq -1, \quad i<j.
\end{equation}
For convenience, we also add $p_{ij}$, $q_{ij}$ for $i\geq j$ so that
\begin{align}
p_{ij}p_{ji}&=q_{ij}q_{ji}=1,\\
p_{ii}&=q_{ii}=1.
\end{align}
The multiparameter $R$ matrix in $\mathrm{End}(\mathbb C^n\otimes \mathbb C^n)\simeq\mathrm{End}(\mathbb C^n)^{\ot 2}$ is given as:
\begin{align*}
R&=u\sum_{i}e_{ii}\otimes e_{ii}+u^{-1} \sum_{i> j}p_{ji} e_{ii}\otimes e_{jj}
 +u^{-1}\sum_{i< j}q_{ij}e_{ii}\otimes e_{jj}\\
&\qquad\qquad +(u-u^{-1})\sum_{i>j}e_{ij}\otimes e_{ji},
\end{align*}
%
where $e_{ij}$ are the standard basis elements of $\mathrm{End}(\mathbb C^n)$. The $R$ matrix
satisfies the well-known Yang-Baxter equation:
\begin{align*}
R_{12}R_{13}R_{23}=R_{23}R_{13}R_{12},
\end{align*}
where $R_{ij}\in \mathrm{End}(\mathbb C^n\otimes \mathbb C^n\otimes \mathbb C^n)$ acts on the $i$th and $j$th copies of $\mathbb C^n$ as $R$ does on $\mathbb C^n\otimes \mathbb C^n$. Note that it specializes to the two-parameter case with $p_{ij}=s, q_{ij}=r^{-1}$ and $u=\sqrt{\frac{s}{r}}$ up to an overall constant \cite{JL}. In particular, the one-parameter case corresponds to $p_{ij}=q_{ij}=u=q$ \cite{J}.

Let $\mathbb C\langle x_1, \ldots,x_n\rangle$ be the free algebra generated by $x_{i}, 1\leq i \leq n$, and $P$ the permutation operator on $\mathbb{C}^n \ot \mathbb{C}^n$ defined by $P(w\ot v)=v\ot w$, $w, v\in\mathbb C^n$.
We define the $R$-matrices $R^{\pm}$ associated with $R$ by $R^+=PRP$, $R^-=R^{-1}$. Explicitly
\begin{align}\nonumber
&R^{+}=u\sum_{i}e_{ii}\otimes e_{ii}
+u^{-1}\sum_{i< j}p_{ij}e_{ii}\otimes e_{jj}
+u^{-1} \sum_{i> j}q_{ji}e_{ii}\otimes e_{jj}\\
&\hskip 2.5in +(u-u^{-1})\sum_{i<j}e_{ij}\otimes e_{ji}, \\ \nonumber
&R^-=u^{-1}\sum_{i}e_{ii}\otimes e_{ii}+u\sum_{i> j}p_{ji}^{-1} e_{ii}\otimes e_{jj}
+u\sum_{i< j}q_{ij}^{-1} e_{ii}\otimes e_{jj}\\
&\hskip 2.5in+(u^{-1}-u )\sum_{i>j}e_{ij}\otimes e_{ji}.
\end{align}

Following the general idea of the FRT construction \cite{FRT}, we introduce $\mathrm{M}_{p, u}(n)$
as the unital associative algebra generated by $t_{ij}, 1 \leq i,j \leq n$ 
subject to the quadratic relations defined in $\mathrm{End}(\mathbb C^n\ot \mathbb C^n)\ot \mathrm{M}_{p, u}(n)$ by
\begin{align*}
R T_{1} T_{2}=T_{2}T_{1} R,
\end{align*}
where $T=(t_{ij})$, $T_1=T\ot I$, and $T_2=I\ot T$.

These relations can be expressed explicitly as follows:
\begin{align}\label{relation1}
&t_{ik}t_{il}=q_{kl}t_{il}t_{ik}, \\ \label{relation2}
&t_{ik}t_{jk}=p_{ij}t_{jk}t_{ik}, \\ \label{relation3}
&q_{kl}t_{il}t_{jk}=p_{ij}t_{jk}t_{il},\\
&q_{kl}^{-1}t_{ik}t_{jl}-q_{ij}^{-1}t_{jl}t_{ik}=(1-u^{-2})t_{il}t_{jk}, \label{relation4}
\end{align}
where $i<j$ and $k<l$.
The algebra $\mathrm{M}_{p, u}(n)$ has a bialgebra structure under the comultiplication
$\mathrm{M}_{p, u}(n)\longrightarrow \mathrm{M}_{p, u}(n)\ot \mathrm{M}_{p, u}(n)$ given by
\begin{equation}
\Delta(t_{ij})=\sum_{k=1}^{n} t_{ik}\otimes t_{kj},
\end{equation}
and the counit given by $\varepsilon(t_{ij})=\delta_{ij}$, the Kronecker symbol. The coproduct can be
written simply as $\Delta(T)=T\dot{\ot}T$. This version of
quantum semigroup generalizes the two-parameter quantum case of \cite{T}.

For any permutation $\sigma$ in $S_n$, the $q$-inversion associated to the parameters $q_{ij}$ is defined as
\begin{equation}\label{e:qinv}
(-q)_{\sigma}=(-1)^{l(\sigma)}\prod_{\substack{i<j\\\si_i>\si_j}}q_{\sigma_{j}\sigma_{i}},
\end{equation}
where $l(\sigma)=|\{(i, j)|i<j, \si_i>\si_j\}|$ is the classical inversion number of $\sigma$. The $p$-inversion
number is defined similarly. In this paper, we use
the $v$-based quantum number defined by
\begin{equation}
[n]_v=1+v+\cdots+v^{n-1},
\end{equation}
and the quantum factorial $[n]_v!=[1]_v[2]_v\cdots [n]_v$ for any natural number $n\in\mathbb N$. In particular, $[0]!=1$.

We define the quantum row-determinant and column-determinant of $T$ as follows.
\begin{align}\label{e:qdet}
\rdet(T)&=\sum_{\sigma\in S_n}(-q)_{\sigma}t_{1,\sigma_1}\cdots t_{n,\sigma_n},\\ \label{e:qper}
\cdet(T)&=\sum_{\sigma\in S_n}(-p)_{\sigma}t_{\sigma_1,1}\cdots t_{\sigma_n,n}.
\end{align}
 The first property we show is that both are group-like elements:
\begin{align*}
\Delta(\rdet(T))&=\rdet(T)\otimes \rdet(T),\\
\Delta(\cdet(T))&=\cdet(T)\otimes \cdet(T).
\end{align*}

To see this we use two multi-parameter quantum exterior algebras associated to the parameters $p_{ij}$ and $q_{ij}$.
Set $\widehat{R}=PR$, and for a polynomial
$f(t)\in \mathbb{C}[t]$ we denote by $\mathcal{I}_{f,l}$ the two-sided ideal generated by the relation
\begin{align*}
f(\widehat{R})(X_1\ot X_2)=0,
\end{align*}
where $X=(x_1,\ldots,x_n)^t$. The quotient algebra
$$\mathbb{C}_{f,l}(X)=\mathbb{C}\langle x_1,\ldots,x_n\rangle/\mathcal{I}_{f,l}.
$$
becomes a left $\mathrm{M}_{p, u}(n)$-comodule \cite{FRT} under the structural map
$\mu_l:\mathbb{C}_{f,l}(X) \rightarrow \mathrm{M}_{p, u}(n)\ot \mathbb{C}_{f,l}(X)$ by $\mu_l(X )=T \dot{\ot} X$, where
$\dot{\ot}$ means the composition of tensor product with matrix product, explicitly
\begin{align}
\mu_l(x_{i})=\sum_{j=1}^n t_{ij}\ot x_{j}.
\end{align}

Similarly we denote by $\mathcal{I}_{f,r}$ the two-sided ideal generated by the relation
\begin{align*}
(Y_1^t\ot Y_2^t)f(\widehat{R})=0,
\end{align*}
where $Y=(y_1,\ldots,y_n)^t$.

The quotient algebra $\mathbb{C}_{f,r}(Y)=\mathbb{C}\langle y_1,\ldots,y_n\rangle/\mathcal{I}_{f,r}$
has a right comodule structure under the map $\mu_r:\mathbb{C}_{f,l}(Y) \rightarrow \mathbb{C}_{f,l}(Y) \ot \mathrm{M}_{p, u}(n)$
given by $\mu_r(Y^t)=Y^t \dot{\ot} T$.

\begin{theorem}
In the bialgebra $\mathrm{M}_{p, u}(n)$, one has that
$$\rdet (T)=\cdet (T).$$
\end{theorem}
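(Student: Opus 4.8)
The plan is to realize both determinants as the scalar by which the generating matrix $T$ acts on the one-dimensional top component of a multiparameter quantum exterior algebra, and then to identify the two scalars by a duality pairing. First I would pin down $f$ so that $\mathbb{C}_{f,l}(X)$ is exactly the $q$-exterior algebra $\Lambda_q$: since $\widehat R=PR$ satisfies a quadratic (Hecke-type) relation it has two eigenvalues, and choosing $f$ to be the factor annihilating the ``symmetric'' eigenspace forces the relations $x_i^2=0$ and $x_ix_j=-q_{ij}^{-1}x_jx_i$ for $i<j$, so that $\dim(\Lambda_q)_n=1$ with top form $x_1\cdots x_n$. Because $\mu_l$ is an algebra homomorphism, I would then compute
\[
\mu_l(x_1\cdots x_n)=\prod_{k=1}^n\Big(\sum_{j}t_{kj}\otimes x_j\Big)=\sum_{j_1,\ldots,j_n}t_{1j_1}\cdots t_{nj_n}\otimes x_{j_1}\cdots x_{j_n},
\]
discard the non-permutation terms (they vanish since $x_i^2=0$), and reorder $x_{\sigma_1}\cdots x_{\sigma_n}=(-q)_\sigma\,x_1\cdots x_n$ by the exterior relations, the accumulated swap-coefficients reproducing exactly the weight $(-q)_\sigma$ of \eqref{e:qinv}. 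This gives $\mu_l(x_1\cdots x_n)=\rdet(T)\otimes x_1\cdots x_n$.

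Symmetrically, I would identify $\mathbb{C}_{f,r}(Y)$ with the $p$-exterior algebra $\Lambda_p$, noting that the transpose in the defining relation $(Y_1^t\otimes Y_2^t)f(\widehat R)=0$ interchanges the roles of the $q_{ij}$ and the $p_{ij}$, and then compute
\[
\mu_r(y_1\cdots y_n)=\sum_{\tau}y_{\tau_1}\cdots y_{\tau_n}\otimes t_{\tau_1 1}\cdots t_{\tau_n n}=y_1\cdots y_n\otimes\cdet(T),
\]
using $y_{\tau_1}\cdots y_{\tau_n}=(-p)_\tau\,y_1\cdots y_n$. Thus $\rdet(T)$ is the top-degree scalar of the left $q$-exterior coaction and $\cdet(T)$ is the top-degree scalar of the right $p$-exterior coaction.

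To conclude $\rdet(T)=\cdet(T)$ I would exhibit a coaction-invariant bilinear pairing $\langle\,,\,\rangle$ between $\Lambda_p(Y)$ and $\Lambda_q(X)$, supplied degree-by-degree by the $R$-matrix (the two algebras are Manin-dual quadratic algebras). The invariance condition, written on the top component as $\langle y_1\cdots y_n,x_1\cdots x_n\rangle\,\cdet(T)=\langle y_1\cdots y_n,x_1\cdots x_n\rangle\,\rdet(T)$ in $\mathrm{M}_{p,u}(n)$, then forces the two determinants to agree once the top pairing is a nonzero scalar. I expect the main obstacle to be precisely this last step: verifying that the pairing intertwines the left and right coactions and is nondegenerate in top degree, which amounts to checking that the inhomogeneous term $(1-u^{-2})t_{il}t_{jk}$ of \eqref{relation4} contributes nothing to the top form. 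As a consistency check, for $n=2$ the whole identity collapses to $q_{12}t_{12}t_{21}=p_{12}t_{21}t_{12}$, which is exactly the cross relation \eqref{relation3}; this single relation is the algebraic heart of the general cancellation.
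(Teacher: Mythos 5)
Your first half coincides with the paper's own setup: the paper likewise takes $f=\widehat{R}+u^{-1}$, identifies $\mathbb{C}_{f,l}(X)=\Lambda_q(X)$ and $\mathbb{C}_{f,r}(Y)=\Lambda_p(Y)$, and reads off $\mu_l(x_1\cdots x_n)=\rdet(T)\ot x_1\cdots x_n$ and $\mu_r(y_1\cdots y_n)=y_1\cdots y_n\ot \cdet(T)$; your $n=2$ reduction to the cross relation \eqref{relation3} is also correct. The divergence is in the concluding mechanism, and there your argument has a genuine gap: the coaction-intertwining pairing between $\Lambda_p(Y)$ and $\Lambda_q(X)$ is asserted rather than constructed, and the difficulty is not where you locate it. Well-definedness of the coactions on the quotients (your worry about the inhomogeneous term of \eqref{relation4}) is automatic in the FRT framework. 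The real issue is descent: the naive contraction $\langle y_{i_1}\cdots y_{i_d},\,x_{j_1}\cdots x_{j_d}\rangle=\prod_a\delta_{i_a j_a}$ does intertwine the right coaction on $Y$-words with the left coaction on $X$-words at the free-algebra level (both sides give $t_{k_1j_1}\cdots t_{k_dj_d}$, a purely formal check), but it does \emph{not} vanish on the relation ideals: already for $n=d=2$ the relation $y_2y_1+p_{12}y_1y_2$ pairs with $x_1x_2$ to the value $p_{12}\neq 0$, so the pairing does not pass to $\Lambda_p\times\Lambda_q$ as it stands. The repair is to twist by the antisymmetrizer: define $\langle\!\langle \bar v,\bar w\rangle\!\rangle=\langle v,\,A^{(d)}w\rangle$, where $A^{(d)}$ is the normalized projector the paper only introduces in Section 4. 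One must then verify three facts you do not address: that $A^{(d)}$ annihilates the relation subspaces on both sides (this uses the Hecke relation $(\widehat{R}-u)(\widehat{R}+u^{-1})=0$, which is exactly where the $(p,u)$-condition $p_{ij}q_{ij}=u^2$ with $u^2\neq-1$ enters); that $A^{(d)}$ is a comodule map, which follows from $\widehat{R}T_1T_2=T_1T_2\widehat{R}$; and that the induced top pairing equals $[n]_{u^2}!^{-1}$, nonzero for generic $u$. Without these, the identity $\langle y_1\cdots y_n,x_1\cdots x_n\rangle\,\cdet(T)=\langle y_1\cdots y_n,x_1\cdots x_n\rangle\,\rdet(T)$ is not established.

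For comparison, the paper sidesteps pairings entirely: it forms the single element $\Phi=Y^t\dot{\ot}T\dot{\ot}X$ in $\Lambda_p(Y)\ot \mathrm{M}_{p,u}(n)\ot\Lambda_q(X)$ and computes $\Phi^n$ in two ways. Grouping $\Phi=\sum_i\omega_i$ with $\omega_i=y_i\ot\delta_i$, the relations give $\omega_i^2=0$ and $\omega_j\omega_i=u^2\omega_i\omega_j$ for $i<j$ (here the factor $(-p_{ij})(-q_{ij})=u^2$ is precisely where the $(p,u)$-condition does its work), whence $\Phi^n=[n]_{u^2}!\,y_1\cdots y_n\ot\rdet(T)\ot x_1\cdots x_n$; regrouping by columns yields the same expression with $\cdet(T)$. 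The triple tensor product thus implements your contraction implicitly, with no descent problem to check; note that both routes silently assume $[n]_{u^2}!\neq 0$ in order to cancel the common factor, so your approach, once completed along the lines above, is no less general than the paper's, only longer.
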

\begin{proof}
Let $f=\widehat{R}+u^{-1}$. It is easy to see that the relations of
$\Lambda_q(X):=\mathbb{C}_{f,l}(X)$ are given by
\begin{align} \label{wedge1}
&x_j  x_i=-q_{ij}x_i x_j, \\
&x_i ^2=0,
\end{align}
where $1\leqslant i<j\leqslant n$. The relations of
$\Lambda_p(Y)=\mathbb{C}_{f,r}(Y)$ are
\begin{align}\label{wedge2}
&y_j  y_i=-p_{ij}y_i y_j, \\
&y_i ^2=0,\label{wedge4}
\end{align}
where $1\leqslant i<j\leqslant n$.

Applying $(\Delta\ot id)\mu_{l}=(id\ot \mu_{l} )\mu_{l}$ to $x_{i_1} \cdots  x_{i_t}$, one has that
\begin{equation}
{\rdet}_q(T^{i_1\ldots i_t}_{j_1\ldots j_t})=\sum_{k_1<\cdots <k_t}
{\rdet}_q(T^{i_1\ldots i_t}_{k_1\ldots k_t})\ot {\rdet}_q(T^{k_1\ldots k_t}_{j_1\ldots j_t}).
 \end{equation}
where $T^{i_1\ldots i_t}_{j_1\ldots j_t}$ refers to the $r\times r$-submatrix formed by the rows and columns of the top and lower indices.
Similarly \begin{equation}
{\cdet}_q(T^{i_1\ldots i_t}_{j_1\ldots j_t})=\sum_{k_1<\cdots <k_t}
{\cdet}_q(T^{i_1\ldots i_t}_{k_1\ldots k_t})\ot {\cdet}_q(T^{k_1\ldots k_t}_{j_1\ldots j_t}).
 \end{equation}
In particular, if $t=n$, one has that
\begin{align*}
\Delta(\rdet(T))&=\rdet(T)\otimes \rdet(T),\\
\Delta(\cdet(T))&=\cdet(T)\otimes \cdet(T).
\end{align*}

Consider the following linear element $\Phi$ in $\mathbb{C}_{f,r}(Y)\ot \mathrm{M}_{p, u}(n)\otimes \mathbb{C}_{f,l}(X)$:
\begin{equation}
\Phi=\sum_{i,j=1}^n y_i\ot t_{ij}\ot   x_j=Y^t\dot{\ot} T\dot{\ot} X.
\end{equation}

Put further $\delta=(\delta_1, \ldots, \delta_n)^T$,
$\partial=(\partial_1, \ldots, \partial_n)^T$, and consider the following elements:
\begin{align}
\delta &=T \dot{\ot} X, \\
\partial &=Y \dot{\ot} T.
\end{align}

Let $\omega_i=y_i\ot \delta_i=\sum_{j=1}^n y_i\ot t_{ij}\ot x_j$. The relations
\eqref{wedge1}-\eqref{wedge4} imply that
\begin{align}\label{e:ext1}
&\omega_i   \omega_i=0, \quad 1\leqslant i\leqslant n, \\ \label{e:ext2}
&\omega_j   \omega_i=u^2 \omega_i   w_j, \quad 1\leqslant i<j\leqslant n.
\end{align}

It follows from (\ref{e:ext1})-(\ref{e:ext2}) that
\begin{align*}
\Phi^n &=(\sum_{\sigma\in S_n}u^{2l(\sigma)})\omega_1 \cdots \omega_n \\
&=[n]_{u^2}!(y_1 \cdots  y_n)\ot (\partial_1 \cdots \partial_n)\\
&=[n]_{u^2}!(y_1 \cdots  y_n) \ot  \rdet(T)\ot  (x_1 \cdots  x_n).
\end{align*}

On the other hand, $\Phi$ can be rewritten as $\Phi=\sum_{i=1}^n\omega_i'$ with
$\omega_i'=\partial_i\ot x_i=\sum_{j=1}^n y_j\ot a_{ji}\ot x_i$.
Then
\begin{align*}
\Phi^n=[n]_{u^2}! (  y_1 \cdots y_n)\ot \cdet(T) \ot (x_1 \cdots  x_n),
\end{align*}
which implies that
$$\rdet(T)=\cdet(T).$$
\end{proof}

Due to this identity, from now on, we will define the multiparameter quantum determinant for the
quantum group as
\begin{equation}\label{e:qdet2}
\begin{aligned}
{\det}_q(T)&=\sum_{\sigma\in S_n}(-q)_{\sigma}t_{1,\sigma_1}\cdots t_{n,\sigma_n}\\
&=\sum_{\sigma\in S_n}(-p)_{\sigma}t_{\sigma_1,1}\cdots t_{\sigma_n,n}.
\end{aligned}
\end{equation}

For a pair of $t$ indices $i_1, \ldots, i_t$ and $j_1, \ldots, j_t$,
we define the quantum row-minor $\xi_{j_1 \ldots j_t}^{i_1 \ldots i_t}=\det_q(T_{j_1 \ldots j_t}^{i_1 \ldots i_t})$
as in (\ref{e:qdet2}).
Like the determinant, the quantum row minor row also equals to the quantum column minor
for any pairs of ordered indices $1\leqslant i_1<\cdots<i_t\leqslant n$ and $1\leqslant j_1<\cdots<j_t
\leqslant n$, which justifies the notation.

For any $t$ indices $i_1, \ldots, i_t$ 
\begin{align}\label{e:minor1}
\delta_{i_1} \cdots \delta_{i_t}
=\sum_{j_1<\cdots<j_t}{\det}_q(T^{i_1\ldots i_t}_{j_1\ldots j_t})
\ot x_{j_1} \cdots  x_{j_t},
\end{align}
where the sum runs through all indices $1\leqslant j_1<\cdots<j_t\leqslant n$. This
implies that ${\det}_q(T^{i_1\ldots i_t}_{j_1\ldots j_t})=0$ whenever there are two identical rows.

As $\delta_i$'s obey the relations \eqref{wedge1}-\eqref{wedge4}, for any $t$-shuffle $\sigma\in S_n$ : $1\leqslant \sigma_1<\cdots<\sigma_t, \sigma_{t+1}<\cdots<\sigma_n\leqslant n$,
one has that
$$\delta_{\sigma_1} \cdots \delta_{\sigma_t}  \delta_{\sigma_{t+1}} \cdots \delta_{\sigma_n}
=(-q)_{\sigma}\delta_1 \cdots \delta_n.
$$
Note that $x_j$'s also satisfy the same relations. This
then implies the following Laplace expansion by invoking \eqref{e:minor1}: for each fixed $t$-shuffle
$\sigma_1<\cdots<\sigma_t, \sigma_{t+1}<\cdots<\sigma_n$, one has that
\begin{equation}\label{e:lap1}
{\det}_q(T)=\sum_{\alpha}
\frac{(-q)_{\alpha}} {(-q)_{\sigma}}
{\det}_q(T_{\alpha_1 \ldots \alpha_t}^{\sigma_1 \ldots \sigma_t})
{\det}_q(T_{\alpha_{t+1} \ldots \alpha_n}^{\sigma_{t+1}\ldots \sigma_n}),
\end{equation}
where the sum runs through all $t$-shuffles $\alpha\in S_n$ such that $\alpha_1<\cdots<\alpha_t, \alpha_{t+1}<\cdots<\alpha_n$. The Laplace expansion generalizes the one-parameter
formula given by \cite{HH}.

In particular, for fixed $i, k$
\begin{equation}\label{e:Lap01}
\begin{aligned}
\delta_{ik}{\det}_q(T)&=\sum_{j=1}^n
\frac{\prod_{l<j}(-q_{lj})}{\prod_{l<i}(-q_{li})}t_{ij}
{\det}_q (T^{\hat{k}}_{\hat{j}})\\
&=\sum_{j=1}^n
\frac{\prod_{l>j}(-q_{jl})}{\prod_{l>i}(-q_{il})}
{\det}_q(T^{\hat{k}}_{\hat{j}})t_{ij},
\end{aligned}
\end{equation}
where $\hat{i}$ means the indices $1, \ldots, i-1, i+1, \ldots, n$ for brevity.

As for the quantum (column) determinant or column-minor, the corresponding Laplace expansion
for a fixed $r$-shuffle  $(\tau_1\ldots \tau_n)$ of $n$ such that $\tau_1<\cdots<\tau_r, \tau_{r+1}<\cdots<\tau_n$
\begin{equation}\label{e:lap2}
{\det}_q(T)
=\sum_{\beta}
\frac{(-p)_{\beta}} {(-p)_{\tau}}
{\det}_q(T^{\beta_1 \ldots \beta_r}_{\tau_1 \ldots \tau_r})
{\det}_q(T^{\beta_{r+1} \ldots \beta_n}_{\tau_{r+1}\ldots \tau_n}),
\end{equation}
where the sum runs through all $r$-shuffles $\beta\in S_n$ such that $\beta_1<\cdots<\beta_r, \beta_{r+1}<\cdots<\beta_n$.

In particular, we have that for fixed $i, k$
\begin{align}\nonumber
\delta_{ik}{\det}_q(T)&=\sum_{j=1}^n
\frac{\prod_{l<j}(-p_{lj})}{\prod_{l<i}(-p_{li})}
t_{ji}{\det}_q(T_{\hat{k}}^{\hat{j}})\\ \label{e:Lap2}
&=\sum_{j=1}^n\frac{\prod_{l>j}(-p_{jl})}{\prod_{l>i}(-p_{il})}
{\det}_q(T^{\hat{j}}_{\hat{k}})t_{ji}.
\end{align}

\begin{theorem}\label{quasi-central}
In the bialgebra $\mathrm{M}_{p, u}(n) $ one has that
$$t_{ij}{\det}_q(T)=u^{2(j-i)}\frac{\prod_{l=1}^{n}q_{li}}{\prod_{l=1}^{n}q_{lj}}{\det}_q(T) t_{ij}.$$
\end{theorem}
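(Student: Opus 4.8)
The plan is to transport $t_{ij}$ from the left of $\det_q(T)$ to its right by expanding $\det_q(T)$ along its $j$-th column, so that $t_{ij}$ first meets entries sharing its column. Using the column Laplace expansion \eqref{e:Lap2} with the fixed column equal to $j$, I write $\det_q(T)=\sum_{r=1}^{n} c_r\, t_{rj}\, M_r$ with $M_r=\det_q(T^{\hat r}_{\hat j})$ and the scalars $c_r$ read off from \eqref{e:Lap2}. Left-multiplying by $t_{ij}$, the same-column relation \eqref{relation2} gives $t_{ij}t_{rj}=p_{ir}t_{rj}t_{ij}$ for every $r$ (with $p_{ir}$ extended through $p_{ir}p_{ri}=1$), which is clean and contributes only a scalar. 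Thus the entire computation reduces to understanding how $t_{ij}$ commutes past each cofactor $M_r$, a quantum minor that omits column $j$.

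The key step I would isolate as a lemma: for any quantum minor $M=\det_q(T^{I}_{J})$ with $j\notin J$, one has $t_{ij}\, M=s\, M\, t_{ij}$ for an explicit scalar $s$ which is a monomial in the $p$'s and $q$'s determined by $i$, $j$ and the index sets $I,J$. To prove it, expand $M$ by \eqref{e:qdet2} and push $t_{ij}$ rightward through each factor using \eqref{relation1} (same row), and \eqref{relation3}, \eqref{relation4} (distinct rows and columns). The relations \eqref{relation1} and \eqref{relation3} are purely multiplicative, but \eqref{relation4} produces an extra summand proportional to $(1-u^{-2})$ each time $t_{ij}$ crosses a lower-right entry. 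The heart of the argument is that these correction terms cancel in pairs---equivalently, they assemble into minors of $M$ with a repeated row index, which vanish by the antisymmetry recorded in \eqref{e:minor1}. A direct check for a $2\times 2$ minor bears this out: the two monomials of $M$ generate opposite $(1-u^{-2})$-terms that annihilate each other, leaving exactly $s\, M\, t_{ij}$.

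Substituting the lemma into the column expansion and reordering via \eqref{relation2}, every summand acquires the common factor $p_{ir}s_r$, where $s_r$ is the lemma scalar for $M_r$. Because $s_r$ has the product form dictated by $I=\{1,\dots,n\}\setminus\{r\}$ and $J=\{1,\dots,n\}\setminus\{j\}$, the combination $p_{ir}s_r$ is independent of $r$; calling it $c$, the sum collapses by \eqref{e:Lap2} to $c\,\det_q(T)\,t_{ij}$. To pin down $c$ I would match a single term and invoke the defining condition \eqref{e:cond-p} in the form $p_{\bullet}=u^2 q_{\bullet}^{-1}$: the accumulated $p$-factors contribute the prefactor $u^{2(j-i)}$, while the $q$-factors assemble into $\prod_l q_{li}/\prod_l q_{lj}$, yielding the claimed identity.

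The main obstacle is the lemma, and inside it the delicate cancellation of the \eqref{relation4}-corrections: one must verify that crossing $t_{ij}$ through the full antisymmetrised sum $M$, rather than a single monomial, is precisely what kills the non-scalar terms, and that the bookkeeping of the $p$-, $q$- and $u$-exponents---tied together by \eqref{e:cond-p}---produces exactly $u^{2(j-i)}\prod_l q_{li}/\prod_l q_{lj}$ and nothing more. A secondary point is the diagonal term $r=i$, where $t_{ij}^2$ appears on both sides and the lemma must be applied to the minor $M_i$ not containing row $i$, forcing the consistency requirement $s_i=c$.
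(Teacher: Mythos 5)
Your key lemma is false in exactly the case your proof cannot avoid: the diagonal term $r=i$, where the cofactor $M_i={\det}_q(T^{\hat i}_{\hat j})$ omits both row $i$ and column $j$. Already in $\mathrm{M}_{p,u}(2)$ with $i=j=1$, relation \eqref{relation4} gives
\begin{equation*}
t_{11}t_{22}-t_{22}t_{11}=(1-u^{-2})\,q_{12}\,t_{12}t_{21}\neq 0,
\end{equation*}
and since the ordered monomials $t_{22}t_{11}$ and $t_{12}t_{21}$ are linearly independent (the algebra has a PBW-type basis under the $(p,u)$-condition \cite{AST}), there is \emph{no} scalar $s$ with $t_{11}t_{22}=s\,t_{22}t_{11}$. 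Your pairwise-cancellation mechanism is real, but only when the minor contains row $i$: there the two $(1-u^{-2})$-summands produced by a $2\times 2$ block of $M$ cancel via \eqref{relation1} (e.g.\ for $n=3$, $i=j=1$, $r=2$ the surviving bracket is $t_{12}t_{13}-q_{23}t_{13}t_{12}=0$), which is the computation you sketch. When the minor contains neither row $i$ nor column $j$, the corrections survive. Note that $t_{11}$ nevertheless \emph{does} commute with ${\det}_q(T)$ in $\mathrm{M}_{p,u}(2)$, as the theorem's scalar equals $1$ for $i=j$: the correction from $t_{11}$ crossing $M_1=t_{22}$ cancels against the other Laplace summand $-q_{12}t_{12}t_{21}$. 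So the cancellation is a conspiracy \emph{across} different cofactor terms of the expansion, not a property of each cofactor separately, and your fallback of "forcing $s_i=c$" cannot repair this, because no $s_i$ exists at all.

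The paper's proof sidesteps commutation with minors entirely. It reads the row expansion \eqref{e:Lap01} as the matrix identity $T'X={\det}_q(T)I$ and the column expansion \eqref{e:Lap2} as $XT''={\det}_q(T)I$, where $x_{ij}={\det}_q(T^{\hat j}_{\hat i})$ and $T',T''$ are the scalar rescalings of $T$ appearing in those expansions; associativity then gives $T'_{ij}{\det}_q(T)=(T'XT'')_{ij}={\det}_q(T)\,T''_{ij}$ entrywise, and the stated scalar $u^{2(j-i)}\prod_{l}q_{li}/\prod_{l}q_{lj}$ drops out from $p_{kl}=u^{2}q_{lk}$ for $k<l$. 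If you want to salvage your route, you must track the $(1-u^{-2})$-corrections of the $r=i$ term and show they cancel in aggregate against those generated by the $r\neq i$ terms --- doable, but substantially harder than the two-sided Laplace argument, which requires no commutation analysis at all.
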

\begin{proof}
Let $X=(x_{ij}), T'=(t_{ij}'), T''=(t_{ij}'')$ be the matrices with entries in $\mathrm{M}_{p, u}(n) $ defined by
\begin{align}
x_{ij}&={\det}_q (T^{\hat{j}}_{\hat{i}}),\\
T'_{ij}&=\frac{\prod_{l<j}(-q_{lj})}{\prod_{l<i}(-q_{li})}t_{ij},\\
T''_{ij}&=\frac{\prod_{l>i}(-p_{il})}{\prod_{l>j}(-p_{jl})}t_{ij}.
\end{align}
It follows from the Laplace expansion that
$$T' {\det}_q=T'XT''={\det}_q T''.$$
Therefore $\frac{\prod_{l<j}(-q_{lj})}{\prod_{l<i}(-q_{li})}t_{ij}{\det}_q(T)
=\frac{\prod_{l>i}(-p_{il})}{\prod_{l>j}(-p_{jl})}{\det}_q(T) t_{ij}.$  This is exactly
\begin{equation}\label{det commute}
t_{ij}{\det}_q(T)=u^{2(j-i)}\frac{\prod_{l=1}^{n}q_{li}}{\prod_{l=1}^{n}q_{lj}}{\det}_q(T) t_{ij}.
\end{equation}
\end{proof}

\begin{remark}\cite{BG}
 ${\det}_q(T)$ is central if and only if $u^{2(j-i)}\prod_{l=1}^{n}q_{li}=\prod_{l=1}^{n}q_{lj}$ for any $i,j$.
\end{remark}

Theorem \ref{quasi-central} implies that ${\det}_q(T)$ is a regular element in the bialgebra $\mathrm{M}_{p, u}(n) $, therefore we can define the localization $\mathrm{M}_{p, u}(n) [{{\det}_q}^{-1}]$, which will be denoted as $\mbox{GL}_{p, u}(n)$ and called the multiparameter quantum
group corresponding to $\mathrm{GL}_n(\mathbb C)$. In fact, Theorem \ref{quasi-central} gives the following identity:
\begin{equation}\label{inverse of det}
{\det}_q(T)^{-1}t_{ij}=u^{2(j-i)}\frac{\prod_{l=1}^{n}q_{li}}{\prod_{l=1}^{n}q_{lj}} t_{ij}{\det}_q(T)^{-1}.
\end{equation}

By defining the antipode
\begin{equation}
\begin{aligned}
S(t_{ij})&=(-1)^{i-j}\frac{\prod_{l<i}q_{li}}{\prod_{l<j}q_{lj}}{\det}_q (T^{\hat{j}}_{\hat{i}}){\det}_q(T)^{-1}\\
&=(-1)^{i-j}\frac{\prod_{l>j}p_{jl}}{\prod_{l>i}p_{il}}{\det}_q(T)^{-1}{\det}_q(T^{\hat{j}}_{\hat{i}}),
\end{aligned}
\end{equation}
the bialgebra $\GL_{p, u}(n)$ 
becomes a Hopf algebra, thus a quantum group
in the sense of Drinfeld.

In fact, the second equation follows from (\ref{inverse of det}). Therefore, $TS(T)=S(T)T=I$ by the Laplace expansions. Subsequently
$$(\id\otimes S)\Delta=(S\otimes \id)\Delta=\varepsilon. $$

\section{Quasideterminants}

In this section we will work with the ring of fractions of noncommutative elements.
First of all let us recall some basic facts about quasideterminants.
  Let $X$ be the set of $n^2$ elements
$x_{ij}, 1\leq i,j\leq n$. For convenience, we also use $X$ to denote the matrix
$(x_{ij})$ over the ring generated by $x_{ij}$.

Denote by $F(X)$ the free division ring generated by $0,1,x_{ij}, 1\leq i,j\leq n$.
It is well-known that the matrix $X=(x_{ij})$ is an invertible element over $F(X)$
 \cite{GGRW} .


Let $I,J$ be two finite subsets of cardinality $k\leq n$ inside $\{1, \ldots, n\}$.
Following \cite{GGRW}, we introduce the notion of quasiderminant.
\begin{definition}
For $i\in I, j\in J$, the $(i,j)$-th quasideterminant $|X|_{ij}$ 
is the following element of $F(X)$:
$$|X|_{ij}=y_{ji}^{-1},$$
where $Y=X^{-1}=(y_{ij})$.
\end{definition}

If $n=1$, $I=i, J=j$. Then $|X|_{ij}=x_{ij}$.

When $n\geq 2$, and let $X^{ij}$ be the $(n-1)\times(n-1)$-matrix obtained from X by
deleting the $i$th row and $j$th column. In general $X^{i_1\cdots i_r, j_1\cdots j_r}$ denotes
the submatrix obtained from $X$ by deleting the $i_1, \cdots, i_r$-th rows, and
$i_1, \cdots, i_r$-th columns. Then
$$|X|_{ij}=x_{ij}-\sum_{i', j'} x_{ii'}(|X^{ij}|_{j'i'})x_{j'j},$$
where the sum runs over $i'\notin I\setminus\{i\}, j'\notin J\setminus\{j\}$.

\begin{theorem}
In the ring of fractions of elements of $\mathrm{M}_{p, u}(n)$, one has that
 \begin{equation}\label{det quasidet1}
 {\det}_q(T)=|T|_{11}|T^{11}|_{22}|T^{12,12}|_{33}\cdots t_{nn}
 \end{equation}
and the quasi-minors on the right-hand side commute with each other. More generally, for
 two permutations $\sigma$ 
and $\tau$ 
of $S_n$, one has that
 \begin{equation}\label{det quasidet2}
 {\det}_q(T)=\frac{(-q)_{\tau}}{(-q)_{\sigma}}|T|_{\si_1\tau_1}|T^{\si_1\tau_1}|_{\si_2\tau_2}|T^{\si_1\si_2,\tau_1\tau_2}|_{\si_3\tau_3}\cdots t_{\si_n\tau_n}.
 \end{equation}
\end{theorem}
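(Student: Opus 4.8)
The plan is to reduce the product formula \eqref{det quasidet1} to a single telescoping identity and then read off \eqref{det quasidet2} by the same mechanism with the $q$-coefficients restored. Working in the ring of fractions $T$ is invertible, and by definition $|T|_{ij}=y_{ji}^{-1}$ with $Y=T^{-1}=(y_{ij})$. The entries of $T^{-1}$ are exactly the normalized quantum cofactors supplied by the Laplace expansions \eqref{e:Lap01} and \eqref{e:Lap2} (equivalently $y_{ji}=S(t_{ji})$ for the antipode $S$, since $S(T)=T^{-1}$); in particular the $(1,1)$-entry carries trivial coefficient, so $y_{11}={\det}_q(T^{11}){\det}_q(T)^{-1}$ and hence
\[
|T|_{11}=y_{11}^{-1}={\det}_q(T){\det}_q(T^{11})^{-1}.
\]
Applying this inside each nested block, and writing $D_k={\det}_q(T^{12\cdots k,12\cdots k})$ with $D_0={\det}_q(T)$, $D_{n-1}=t_{nn}$, $D_n=1$, I get the single-factor identity $|T^{12\cdots k,12\cdots k}|_{(k+1)(k+1)}=D_kD_{k+1}^{-1}$. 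The right side of \eqref{det quasidet1} is then $D_0D_1^{-1}D_1D_2^{-1}\cdots D_{n-1}D_n^{-1}$, which telescopes literally to ${\det}_q(T)$ and requires no commutativity for the collapse.

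The commutativity of the factors I would obtain from Theorem~\ref{quasi-central} applied to the sub-bialgebra generated by the rows and columns indexed by $\{k+1,\dots,n\}$, which gives $t_{ab}D_k=u^{2(b-a)}\bigl(\prod_{l>k}q_{la}\bigr)\bigl(\prod_{l>k}q_{lb}\bigr)^{-1}D_kt_{ab}$ for $a,b>k$. The key observation is that when $D_k$ is pushed past any determinant monomial of a smaller nested block $D_m$ with $m>k$, the row and column indices of that monomial each run exactly over $\{m+1,\dots,n\}$, so $\sum_i(b_i-a_i)=0$ and the $q$-products in numerator and denominator coincide; every scalar is therefore $1$ and the nested principal minors $D_0,\dots,D_n$ commute with one another. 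Consequently each $f_k=D_kD_{k+1}^{-1}$ commutes with every $f_m$, which is the asserted commutativity.

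For the general identity \eqref{det quasidet2} I would rerun the argument with the full coefficient $|B|_{ij}=(-1)^{i-j}\bigl(\prod_{l<i}q_{li}\bigr)\bigl(\prod_{l<j}q_{lj}\bigr)^{-1}{\det}_q(B){\det}_q(B^{\hat i}_{\hat j})^{-1}$, the products running over the surviving index set, applied to the blocks $B=T^{\sigma_1\cdots\sigma_k,\tau_1\cdots\tau_k}$. Setting $M_k={\det}_q(T^{\sigma_1\cdots\sigma_k,\tau_1\cdots\tau_k})$ this yields $|T^{\sigma_1\cdots\sigma_k,\tau_1\cdots\tau_k}|_{\sigma_{k+1}\tau_{k+1}}=c_kM_kM_{k+1}^{-1}$, and the product telescopes to $\bigl(\prod_kc_k\bigr){\det}_q(T)$. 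The main obstacle is purely the bookkeeping of the coefficients: one must show $\prod_{k=0}^{n-1}c_k=(-q)_\sigma/(-q)_\tau$. I expect this to follow by recording, as each index $\sigma_{k+1}$ (resp. $\tau_{k+1}$) is deleted, its rank among the surviving indices; summing these ranks reconstructs the inversion number $l(\sigma)$ (resp. $l(\tau)$) for the signs and the $q$-inversion product $\prod_{i<j,\,\sigma_i>\sigma_j}q_{\sigma_j\sigma_i}$ for the parameter part, matching the definition \eqref{e:qinv} of $(-q)_\sigma$. The specialization $\sigma=\tau=\mathrm{id}$ forces $c_k\equiv1$ and recovers \eqref{det quasidet1}.
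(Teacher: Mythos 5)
Your proposal is correct and takes essentially the same route as the paper: both identify the quasideterminants as $|T|_{ij}=S(t_{ji})^{-1}$ via the explicit antipode/cofactor formula, collapse the product of nested quasi-minors by telescoping (the paper's ``induction on the size of $T$''), and derive commutativity of the factors from Theorem~\ref{quasi-central} applied to the nested principal minors $D_k$. Your coefficient bookkeeping for \eqref{det quasidet2} --- showing $\prod_k c_k=(-q)_\sigma/(-q)_\tau$ by tracking, at each deletion step, the surviving indices smaller than $\sigma_{k+1}$ and $\tau_{k+1}$, which exactly reassembles the $q$-inversion products of \eqref{e:qinv} --- is precisely the computation the paper leaves implicit in that induction.
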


\begin{proof}
By definition the quasi-determinants of $T$ are  inverses of the entries of the antipode $S(T)$,
\begin{equation}
\begin{aligned}
|T|_{ij}=S(t_{ji})^{-1}&=\frac{\prod_{l<i}(-q_{li})}{\prod_{l<j}(-q_{lj})}{\det}_q(T){\det}_q (T^{ij})^{-1}\\
&=\frac{\prod_{l>j}(-p_{jl})}{\prod_{l>i}(-p_{il})}{\det}_q(T^{ij})^{-1}{\det}_q(T),
\end{aligned}
\end{equation}
then
\begin{equation}
\begin{aligned}
\frac{\prod_{l<j}(-q_{lj})}{\prod_{l<i}(-q_{li})}|T|_{ij}{\det}_q (T^{ij})={\det}_q(T), \\
\frac{\prod_{l>i}(-p_{il})}{\prod_{l>j}(-p_{jl})} {\det}_q(T^{ij})|T|_{ij}= {\det}_q(T).
\end{aligned}
\end{equation}
By induction on the size of the matrix $T$, one sees that (\ref{det quasidet1}) and (\ref{det quasidet2}) hold.

It follows from \eqref{det commute} that ${\det}_q(T^{\{1,\ldots,s\}\{1,\ldots,s\}})$ and
 ${\det}_q(T^{\{1,\ldots,t\}\{1,\ldots,t\}})$ commute for $1\leq s,t\leq n-1$. Any factor on the right hand side of \eqref{det quasidet1} can be expressed as ${\det}_q (T^{\{1,\ldots,s\}\{1,\ldots,s\}}) {{\det}_q (T^{\{1,\ldots,s+1\}\{1,\ldots,s+1\}})}^{-1}$ multiplied by a scalar,
 therefore they commute with each other.
\end{proof}

\section{Multi-parameter quantum algebras}
In this section we study the algebra $U(R)=\mathrm{U}_{p, u}(\mathfrak{gl}_n)$ as the dual Hopf algebra of $\mbox{GL}_{p, u}(n)$. Following \cite{FRT, RTF},
$U(R)$ is defined as the associative algebra generated by
$l_{ij}^{+}$, $l_{ji}^{-}$, $1\leq i \leq j \leq n$ with unit subject to the relations:
\begin{align}\label{RTT relation 1}
R^+L_1^{\pm}L_2^{\pm}=L_2^{\pm}L_1^{\pm}R^+,\\ \label{RTT relation 2}
R^+L_1^{+}L_2^{-}=L_2^{-}L_1^{+}R^+,
\end{align}
where $L^{\pm}=(l_{ij}^{\pm})_{n\times n}$, $L_1^{\pm}=L^{\pm}\otimes I$ and $L_2^{\pm}=I\ot L^{\pm}$.
$U(R)$ is a Hopf algebra with the coproduct, counit and antipode given by
\begin{align*}
&\Delta(L^{\pm})=L^{\pm}\dot{\ot} L^{\pm},\\
&\varepsilon (L^{\pm})=I,\\
&S(L^{\pm})=(L^{\pm})^{-1}.
\end{align*}
There is a unique pairing of Hopf algebras
\begin{equation}
( \ \ ,\ \ ):U(R)\times \GL_{p,u}(n)\rightarrow \mathbb C
\end{equation}
satisfying the following relations
\begin{align}\label{hopf pair}
(L_{1}^{\pm}, T_2)=R^{\pm}.
\end{align}

In terms of generators, the pairing is given by
\begin{align}
&(l_{kk}^{\pm},t_{ij})=\delta_{ij} u^{\pm} , k=j ,\\
&(l_{kk}^{\pm}, t_{ij})=\delta_{ij} q_{jk}u^{-1} ,  k>j,\\
&(l_{kk}^{\pm},t_{ij})=\delta_{ij} p_{kj} u^{-1},  k<j,\\
&(l_{k,l}^{+},t_{ij})=\delta_{k,j}\delta_{i,l} ( u-u^{-1}),k<l,  \\
&(l_{l,k}^{-},t_{ij})=\delta_{l,j} \delta_{i,k}(u^{-1}-u ) ,k<l.
\end{align}
The pairing is extended to the whole algebra as follows: for any $a, b\in U(R), u, v\in \GL_{p,u}(n)$
\begin{align}
(ab, u)&=(a\ot b, \Delta(u)),\\
(a, uv)&=(\Delta(a), u\ot v).
\end{align}
It is easy to see that
\begin{align}
&(l_{k,l}^{\pm},{\det}_q(T))=0 , k\neq l , \\
&(l_{k,k}^{+},{\det}_q(T))=u^{2-n}\prod_{j<k} q_{jk}  \prod_{j>k} p_{kj} ,\\
&(l_{k,k}^{-},{\det}_q(T))=u^{n-2} \prod_{j<k} p_{jk}^{-1} \prod_{j>k} q_{kj}^{-1}.
\end{align}

The elements of $U(R)$ can be viewed as linear functionals on
$\GL_{p,u}(n)$ in a canonical method. If $V$ is a right $\GL_{p,u}(n)$-comodule (resp. left
$\GL_{p,u}(n)$-comodule) with the structure map $L_G:V\rightarrow
V\otimes \GL_{p,u}(n)$ (resp. $L_G:V\rightarrow  \GL_{p,u}(n)\otimes
V$), then $V$ has a left (resp. right) $U(R)$-module structure via
\begin{equation}
a.v=(id\otimes a)R_G(v) \ \ (resp.\ v.a=(a\otimes id)L_G(v)),
\end{equation}
for all $a\in U(R)$ and $v\in V$. Thus
\begin{align}\label{e:action1}
L^{\pm}_1.T_2=R^{\pm}T_2,\\ \label{e:action2}
T_1.L_2^{\pm}=T_1R^{\pm}.
\end{align}
So the action on the generators is given by
\begin{align}
&l_{kk}^{\pm}.t_{ik}=u^{\pm 1} t_{ik}, \\ 
&l_{kk}^{\pm}.t_{ij}=u^{-1}q_{jk}t_{ij},  k>j,\\ 
&l_{kk}^{\pm}.t_{ij}=u^{-1} p_{kj} t_{ij},  k<j,\\
&l_{kl}^{+}.t_{ij}=\delta_{k,j} (u-u^{-1})t_{il},k<l,  \\
&l_{lk}^{-}.t_{ij}=\delta_{l,j} (u^{-1}-u )t_{ik},k<l.
\end{align}

Similarly the right action of $U(R)$ is given by
\begin{align}
&t_{kj}.l_{kk}^{\pm}=u^{\pm 1} t_{kj},\\
&t_{ij}.l_{kk}^{\pm}=u^{-1}q_{ik} t_{ij},  k>i,\\
&t_{ij}.l_{kk}^{\pm}=u^{-1}p_{ki}t_{ij},  k<i,\\
&t_{ij}.l_{k,l}^{+}=\delta_{l,i} (u-u^{-1})t_{k,j}, k<l, \\
&t_{ij}.l_{l,k}^{-}=\delta_{k ,i} (u^{-1}-u )t_{l,j},k<l.
\end{align}

Moreover the action on ${\det}_q(T)$  is as follows:
\begin{align}
&l_{kl}^{\pm}.{\det}_q(T)={\det}_q(T). l_{k,l}^{\pm}=0, k\neq l ,\\
&l_{kk}^{+}.{\det}_q(T)={\det}_q(T).l_{k,k}^{+}=u^{2-n} \prod_{j<k} q_{jk} \prod_{j>k} p_{kj}  {\det}_q(T)  ,\\
&l_{kk}^{-}.{\det}_q(T)={\det}_q(T).l_{k,k}^{-}=u^{n-2} \prod_{j<k} p_{jk}^{-1} \prod_{j>k} q_{kj}^{-1}  {\det}_q(T).
\end{align}


For the spectral parameter $\lambda$, we introduce the $R$-matrix $R(\lambda)=\lambda R^+ -{\lambda} ^{-1}R^{-}$,
which satisfies the Yang-Baxter equation:
\begin{equation}\label{YBE}
R_{12}(\lambda/\mu)R_{13}(\lambda)R_{23}(\mu)=R_{23}(\mu)R_{13}(\lambda)R_{12}(\lambda/\mu).
\end{equation}
Set $L(\lambda)=\lambda L^+ -\lambda^{-1}L^{-}$. Then the commutation relations \eqref{RTT relation 1} and
\eqref{RTT relation 2} can be combined into one: 
\begin{equation}\label{YBE2}
R(\lambda/\mu)L_1 (\lambda)L_2 (\mu)=L_2 (\mu)L_1 (\lambda)R(\lambda/\mu).
\end{equation}

As $U(R)$ acts on the quantum group from both sides, we will adopt the following convention to distinguish the actions. For any element $a$ of $U(R)$ or $\mathrm{U}_{p, u}(\mathfrak{gl}_n)$,  
we denote by $a$ and $a^{\mathrm{o}}$ its action from the left and right respectively when viewed as operators.

In particular, we consider the left (right) action of the elements $l_{ij}^{\pm}$ on $\mathrm{GL}_{p, u}(n)$. Mimicking the
classical action of $\mathrm{GL}_n(\mathbb C)\times\mathrm{GL}_n(\mathbb C)$ on the algebra of polynomial functions
$\mathcal P(\mathrm{M}_n(\mathbb C))$, we introduce
the following special linear operator $L(\lambda)S(T)^{\circ}\in \mathrm{End}(\mathrm{GL}_{p, u}(n))$. Explicitly its action on any $\phi\in \mathrm{GL}_{p, u}(n)$ is given by
\begin{equation}
L(\lambda)S(T)^{\circ}(\phi)=L(\lambda).(\phi.S(T))
\end{equation}
The components of $L(\lambda)S(T)^{\circ}$ behave similarly as the classical 
differential operators $\frac{\partial}{\partial t_{ij}}$.
Similarly, one can also consider the operator $S(T)L(\lambda)^{\circ}$. It turns out that these two operators are the same.

\begin{proposition}\label{P:diff} For any spectral parameter $\lambda$, the following identity holds over $\GL_{p, u}(n)$
\begin{align}\label{e:par1}
L(\lambda)S(T)^{\circ}=S(T)L(\lambda)^{\circ}.
\end{align}
Set $L(\lambda)S(T)^{\circ}=(u-u^{-1})\partial(\lambda)^t$, and $\partial(\lambda)=\lambda\partial^{+}-\lambda^{-1}\partial^-$, then
the matrix entries $\partial_{ij}^{\pm}$ of $\partial^{\pm}$ are the linear operators in $\mathrm{End}(\GL_{p,u}(n))\otimes \mathbb{C}[u,\ u^{-1}]$
that satisfy the following relations:
\begin{align}\label{eq1}
 L_{ji}( \lambda)=( u -u^{-1} )\sum_{k=1}^{n}t_{ki}^{\mathrm{o}}\partial_{kj}(\lambda),\\ \label{eq2}
 L_{ji}^{\mathrm{o}}(\lambda )=( u -u^{-1} )\sum_{k=1}^{n}t_{jk}\partial_{ik}(\lambda).
\end{align}
\end{proposition}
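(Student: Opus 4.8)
My plan is to separate the proposition into its two assertions: the operator identity \eqref{e:par1}, and the factorizations \eqref{eq1}--\eqref{eq2} of the root-vector matrix $L(\lambda)$ through the matrix $\partial(\lambda)$ of quantum partial derivatives. The structural inputs will be the action formulas \eqref{e:action1}--\eqref{e:action2}, the fact that $\GL_{p,u}(n)$ is a $U(R)$-bimodule algebra, and the antipode relations $TS(T)=S(T)T=I$ together with their transposes (e.g. $\sum_k t_{kj}S(t_{mk})=\delta_{jm}$). Throughout I would split each identity into its $\lambda^{\pm 1}$-homogeneous parts, so that every statement about $L(\lambda)$ and $\partial(\lambda)=\lambda\partial^+-\lambda^{-1}\partial^-$ reduces to the corresponding separate statements for $L^\pm$ and $\partial^\pm$.

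I would first dispose of \eqref{eq1}, which is essentially formal. Starting from the definition $L(\lambda)S(T)^{\mathrm o}=(u-u^{-1})\partial(\lambda)^t$, I right-compose with the matrix $T^{\mathrm o}$ of right-multiplication operators. Since right multiplications compose as an anti-homomorphism, the operator $t_{kj}^{\mathrm o}$ applied before $S(t_{mk})^{\mathrm o}$ amounts to multiplication by $t_{kj}S(t_{mk})$, and summing over $k$ collapses via $\sum_k t_{kj}S(t_{mk})=\delta_{jm}$; hence $L(\lambda)S(T)^{\mathrm o}\,T^{\mathrm o}=L(\lambda)$, which is precisely \eqref{eq1}. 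The only delicate point is bookkeeping the order of composition of the multiplication operators.

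The substance of the proposition is the identity \eqref{e:par1}, and I claim it is equivalent to \eqref{eq2}. Granting \eqref{eq2} in the matrix form $L(\lambda)^{\mathrm o}=(u-u^{-1})T\,\partial(\lambda)^t$, one multiplies on the left by $S(T)$ and uses $S(T)T=I$ to obtain $S(T)L(\lambda)^{\mathrm o}=(u-u^{-1})\partial(\lambda)^t=L(\lambda)S(T)^{\mathrm o}$; conversely \eqref{e:par1} returns \eqref{eq2} after left multiplication by $T$. Thus it suffices to prove the single identity $T\,L(\lambda)S(T)^{\mathrm o}=L(\lambda)^{\mathrm o}$, i.e. that conjugating the left action by $T$ and $S(T)$ reproduces the right action. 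I would prove this by evaluating both sides on an arbitrary $\phi\in\GL_{p,u}(n)$: expand the left-hand side through the coproduct $\Delta(L^\pm)=L^\pm\,\dot{\ot}\,L^\pm$ and the Hopf pairing, use the antipode transport $a\cdot S(\psi)=S(\psi\cdot S_U(a))$ (where $S_U$ is the antipode of $U(R)$, with $S_U(L^\pm)=(L^\pm)^{-1}$) to slide the antipode past the action, and then invoke the right-action formula obtained from \eqref{e:action2}, namely $T_1.(L_2^\pm)^{-1}=T_1(R^\pm)^{-1}$.

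I expect the main obstacle to be exactly this last step, because the left $U(R)$-action does \emph{not} commute with right multiplication by elements of $\GL_{p,u}(n)$ (the action satisfies a Leibniz rule), so \eqref{e:par1}/\eqref{eq2} cannot be reached by the naive cancellation that settles \eqref{eq1}. After the reductions above, what remains is a purely R-matrix statement: a partial-trace (crossing) identity asserting that the relevant contraction of $R(\lambda)$ against $(R^\pm)^{-1}$ collapses to the scalar $(u-u^{-1})$ times the identity, which, combined with $TS(T)=I$, reassembles the right action $L(\lambda)^{\mathrm o}$. Verifying this contraction directly from the explicit form of $R^\pm$ is the computational heart of the argument; the surrounding Hopf-algebraic manipulations are routine once the $\lambda^{\pm 1}$-components are handled separately.
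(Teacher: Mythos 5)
Your reduction scheme (split into $\lambda^{\pm 1}$-components, clear $S(T)$ against $T$, observe that \eqref{e:par1} and \eqref{eq2} are equivalent) matches the paper's, but two steps do not survive scrutiny. First, your ``formal'' disposal of \eqref{eq1} rests on the identity $\sum_k t_{kj}S(t_{mk})=\delta_{jm}$, which is \emph{false} in $\GL_{p,u}(n)$. The antipode axioms give $\sum_k t_{jk}S(t_{km})=\delta_{jm}$ and $\sum_k S(t_{jk})t_{km}=\delta_{jm}$; the transposed pattern you need is governed by $S^{-1}$, because $(T^t)^{-1}=(S^{-1}(T))^t$, and here $S^2\neq\mathrm{id}$ in general. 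Already in the one-parameter specialization $p_{ij}=q_{ij}=u=q$ with $n=2$ one computes, writing $T=\begin{pmatrix}a&b\\c&d\end{pmatrix}$, that $\sum_k t_{k1}S(t_{1k})=(ad-q^{-1}bc)\,{\det}_q(T)^{-1}=1+(q-q^{-1})bc\,{\det}_q(T)^{-1}\neq 1$. The error is exactly the order-of-composition point you flagged: since right multiplications compose anti-homomorphically, putting $T^{\mathrm{o}}$ rightmost (applied first), as in your $L(\lambda)S(T)^{\mathrm{o}}T^{\mathrm{o}}$, produces the non-telescoping products $t_{ki}S(t_{mk})$. The repair is to compose $t^{\mathrm{o}}_{ki}$ \emph{last}, exactly as \eqref{eq1} is written: from $(u-u^{-1})\partial_{kj}(\lambda)\varphi=\sum_m (L_{jm}(\lambda).\varphi)S(t_{mk})$ one gets $(u-u^{-1})\sum_k t^{\mathrm{o}}_{ki}\partial_{kj}(\lambda)\varphi=\sum_{k,m}(L_{jm}(\lambda).\varphi)S(t_{mk})t_{ki}=L_{ji}(\lambda).\varphi$ by the genuine identity $S(T)T=I$. (Your derivation of \eqref{eq2} from \eqref{e:par1} by left multiplication with $S(T)$ is fine, since left multiplications compose covariantly.)

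Second, and more seriously, the only nontrivial assertion, \eqref{e:par1}, is left unproven in your outline. You correctly reduce it to $TL(\lambda).\varphi=\varphi.L(\lambda)T$, but then defer the verification to an unformulated ``partial-trace/crossing identity'' contracting $R(\lambda)$ against $(R^{\pm})^{-1}$, plus an antipode-transport detour; none of that is carried out, and your expectation that the contraction collapses to $(u-u^{-1})$ times the identity misplaces that factor — $(u-u^{-1})$ is merely the normalization in the definition of $\partial(\lambda)$ and never enters \eqref{e:par1}. The paper needs no such machinery: on generators the identity is one line from the action formulas \eqref{e:action1}--\eqref{e:action2} and the RTT relations, namely $T_{1}(L_1^{\pm}.T_{2})=T_{1}T_{2}R^{\pm}=R^{\pm}T_{2}T_{1}=(T_{2}.L^{\pm})T_{1}$, and it propagates to all of $\GL_{p,u}(n)$ by multiplicativity of the matrix action, $L^{\pm}.(\varphi\psi)=(L^{\pm}.\varphi)(L^{\pm}.\psi)$, which follows from $\Delta(L^{\pm})=L^{\pm}\dot{\ot}L^{\pm}$ (and also covers ${\det}_q(T)^{-1}$, since the $l^{\pm}_{kk}$ act on ${\det}_q(T)$ by invertible scalars). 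This generator-plus-coproduct step is absent from your plan; without it, the proposal proves neither \eqref{e:par1} nor, consequently, \eqref{eq2}.
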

\begin{proof}
The identity \eqref{e:par1} means that for any $\varphi\in \GL_{p, u}(n)$,
\begin{equation}\label{eq par1}
\begin{split}
( u-u^{-1} ) \partial(\lambda)^t \varphi  &=( L(\lambda).\varphi ) S(T)\\
&=S(T)(\varphi.L(\lambda) ).
\end{split}
\end{equation}
Multiplying $T$ to both sides of \eqref{eq par1}, the identity becomes the following one:
\begin{align}\label{eq par}
TL(\lambda).\varphi=\varphi.L(\lambda)T,
\end{align}
or equivalently, $TL^{\pm}.\varphi=\varphi.L^{\pm}T$ on $\GL_{p,u}(n)$. This latter identity can be shown as follows.
For the generators $t_{ij}$, by the action of $L^{\pm}$ \eqref{e:action1}-\eqref{e:action2} we have that
\begin{align}
T_{1}L_1.T_{2}=T_{1}T_{2}R^{\pm}=R^{\pm}T_{2}T_{1}=T_{2}.L^{\pm} T_{1}.
\end{align}

The validity of \eqref{eq par} in general follows from the coproduct. In fact for any two elements $\varphi, \psi\in \GL_{p,u}(n)$, it follows from the action of $L^{\pm}$ that
\begin{equation}
\begin{split}
TL^{\pm}.(\varphi\psi)&=T\Delta(L^{\pm}).(\varphi\ot \psi)\\
&=T(L^{\pm}.\varphi)(L^{\pm}.\psi)\\
&=(\varphi.L^{\pm})T(L^{\pm}.\psi)\\
&=(\varphi.L^{\pm})(\psi.L^{\pm})T\\
&=(\varphi\psi).L^{\pm}T.
\end{split}
\end{equation}
Therefore \eqref{e:par1} is proved and the operators $\partial(\lambda)$ are uniquely determined by the expression \eqref{eq par1}.
\end{proof}

Set $\widehat{R}(\lambda)=PR(\lambda)$.
Then the Yang-baxter equation \eqref{YBE} is equivalent to the braid
relation
\begin{equation}\label{YBE1}
\widehat{R}_{12}(\lambda/\mu)\widehat{R}_{23}(\lambda)\widehat{R}_{12}(\mu)
=\widehat{R}_{23}(\mu)\widehat{R}_{12}(\lambda)\widehat{R}_{23}(\lambda/\mu).
\end{equation}
Correspondingly the defining relation \eqref{YBE2} can be written as
\begin{equation}\label{YBE2'}
\widehat{R}(\lambda/\mu)L(\lambda)\ot L(\mu)
=L(\mu)\ot L(\lambda)\widehat{R}(\lambda/\mu).
\end{equation}

For any $i<j$, $\widehat{R}(u^{-1}) e_j\ot e_i= -p_{ij}^{-1} \widehat{R}(u^{-1}) e_i\ot e_j$.
We introduce the antisymmetrizers inductively as follows. Let $s_2=\widehat{R}_{12}(u^{-1})$,
and iteratively define that
$$
s_{k+1}=\widehat{R}_{12}(u^{-1})\widehat{R}_{23}(u^{-2})\cdots\widehat{R}_{k,k+1}(u^{-k})
s_k.$$

We denote by $A^{(n)}$ the normalized antisymmetrizer:
\begin{equation}
A^{(n)}=\frac{1}{[n]_{u^2}!}\sum_{\sigma,\tau \in S_n}(-q)_{\sigma}^{-1}(-p)_{\tau}^{-1}
e_{\sigma(1)\tau(1)}\ot \cdots \ot e_{\sigma(n)\tau(n)},
\end{equation}
then $(A^{(n)})^2=A^{(n)}$.
The following lemma can be verified by induction.

\begin{lemma}\label{per1} One has that
\begin{equation}
s_n=(u-u^{-1})^{\frac{n(n-1)}{2}}[n]_{u^2}! A^{(n)}
\end{equation}
\end{lemma}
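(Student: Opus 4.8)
The plan is to establish the closed form for $s_n$ by induction on $n$, exploiting the recursive definition of the antisymmetrizers together with the braid relation \eqref{YBE1}. The base case $n=2$ is immediate: by definition $s_2=\widehat{R}_{12}(u^{-1})$, and a direct computation of $\widehat{R}(u^{-1})=P(u^{-1}R^+-uR^-)$ on the ordered basis $e_i\ot e_j$ shows that $s_2$ equals $(u-u^{-1})[2]_{u^2}!\,A^{(2)}$, matching the formula since $\frac{n(n-1)}{2}=1$ when $n=2$. The key structural input is the relation $\widehat{R}(u^{-1})e_j\ot e_i=-p_{ij}^{-1}\widehat{R}(u^{-1})e_i\ot e_j$ for $i<j$ noted just above, which expresses that each $\widehat{R}(u^{-1})$ already carries the antisymmetry in adjacent slots with the correct $p$-weight.

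For the inductive step I would assume $s_k=(u-u^{-1})^{\frac{k(k-1)}{2}}[k]_{u^2}!\,A^{(k)}$ and compute $s_{k+1}$ from its defining recursion $s_{k+1}=\widehat{R}_{12}(u^{-1})\cdots\widehat{R}_{k,k+1}(u^{-k})\,s_k$. First I would verify that $A^{(k+1)}$ factors as a normalized product of $A^{(k)}$ (acting on the first $k$ tensor slots) with the prefactor chain $\widehat{R}_{12}(u^{-1})\cdots\widehat{R}_{k,k+1}(u^{-k})$; this is the quantum analogue of the classical identity that writes the full antisymmetrizer as a product of a smaller antisymmetrizer with a single "column" of transpositions. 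Granting that factorization, substituting the inductive hypothesis for $s_k$ and collecting the scalar prefactors reduces the claim to tracking two things: the power of $(u-u^{-1})$, which gains exactly $k$ new factors (one from each of the $k$ maps $\widehat{R}_{j,j+1}(u^{-j})$ in the new chain, since the leading coefficient of $\widehat{R}(\lambda)=\lambda R^+-\lambda^{-1}R^-$ contributes the diagonal term $u-u^{-1}$ on the off-diagonal matrix units), upgrading $\frac{k(k-1)}{2}$ to $\frac{k(k-1)}{2}+k=\frac{(k+1)k}{2}$; and the quantum-integer scalar, which gains the factor $[k+1]_{u^2}$ from the eigenvalue $\sum_{\sigma\in S_{k+1}/S_k}u^{2l(\sigma)}=[k+1]_{u^2}$ produced when the chain acts on the symmetrized tensor, upgrading $[k]_{u^2}!$ to $[k+1]_{u^2}!$.

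The main obstacle will be making the factorization of $A^{(k+1)}$ precise and verifying that the spectral shifts $u^{-1},u^{-2},\ldots,u^{-k}$ in the recursion are exactly the values at which each $\widehat{R}_{j,j+1}$ projects onto its antisymmetric eigenspace; these are the specializations where $\widehat{R}(\lambda)$ degenerates, and only at these points does the telescoping product collapse to a scalar multiple of the idempotent $A^{(k+1)}$. I would handle this by applying the braid relation \eqref{YBE1} to move the new chain past $A^{(k)}$ and using the eigenvalue relation $\widehat{R}(u^{-1})e_j\ot e_i=-p_{ij}^{-1}\widehat{R}(u^{-1})e_i\ot e_j$ repeatedly to reduce every term to the fundamental ordered monomial $e_1\ot\cdots\ot e_{k+1}$, thereby reading off both the $(-q)_\sigma^{-1}(-p)_\tau^{-1}$ weights defining $A^{(k+1)}$ and the scalar $[k+1]_{u^2}$. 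Finally, the identity $(A^{(n)})^2=A^{(n)}$ already recorded provides an independent consistency check on the normalization, confirming the product formula is correctly scaled.
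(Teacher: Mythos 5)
The paper offers no argument for this lemma beyond the single sentence that it ``can be verified by induction,'' so your instinct to supply the induction is the right one; but two of your key steps fail as stated. First, your claim that the spectral shifts $u^{-1},u^{-2},\ldots,u^{-k}$ ``are exactly the values at which each $\widehat{R}_{j,j+1}$ projects onto its antisymmetric eigenspace'' and that ``these are the specializations where $\widehat{R}(\lambda)$ degenerates'' is false for $j\geq 2$. On the block spanned by $e_i\otimes e_j$ and $e_j\otimes e_i$ ($i<j$), the $(p,u)$-condition $p_{ij}q_{ij}=u^2$ gives $\det\widehat{R}(\lambda)=(u-u^{-1})^2-(\lambda-\lambda^{-1})^2$, so $\widehat{R}(\lambda)$ degenerates only at $\lambda=\pm u^{\pm1}$; at $\lambda=u^{-2},\ldots,u^{-k}$ it is invertible, and the antisymmetry relation $\widehat{R}(u^{-1})e_j\otimes e_i=-p_{ij}^{-1}\widehat{R}(u^{-1})e_i\otimes e_j$ that you propose to apply ``repeatedly'' is available only for the single factor at parameter $u^{-1}$. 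The product does not collapse factor-by-factor: the actual mechanism is that $\widehat{R}_{12}(u^{-1})\widehat{R}_{23}(u^{-2})\cdots\widehat{R}_{k,k+1}(u^{-k})s_k$ realizes a reduced word of the longest element of $S_{k+1}$ with the crossing of strands $a<b$ occurring at parameter $u^{-(b-a)}$, and one must use the braid relation \eqref{YBE1} to rewrite the product so that, for each $i$, some factor $\widehat{R}_{i,i+1}(u^{-1})$ stands outermost; only then does the image land in the joint $q$-antisymmetric subspace. You gesture at the braid relation, but the degeneration premise on which your telescoping rests is wrong.

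Second, your inductive hypothesis $s_k=(u-u^{-1})^{k(k-1)/2}[k]_{u^2}!\,A^{(k)}$ is not meaningful as written, because $A^{(k)}$ is undefined for $k<n$: the paper's formula for $A^{(n)}$, a sum over pairs of permutations in $S_n$, is special to the top degree, where the antisymmetric subspace of $(\mathbb{C}^n)^{\otimes n}$ is one-dimensional; restricting that formula to $S_k$ uses only matrix units $e_{ij}$ with $i,j\leq k$ and is not the antisymmetrizer on $(\mathbb{C}^n)^{\otimes k}$, whose image has dimension $\binom{n}{k}$. A repaired induction must carry a statement about $s_k$ on arbitrary monomials $e_{i_1}\otimes\cdots\otimes e_{i_k}$, not just on $e_1\otimes\cdots\otimes e_k$. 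Finally, the scalar bookkeeping---the entire content of the lemma---cannot be waved through as ``immediate'': with the paper's stated conventions a direct computation gives $\widehat{R}(u^{-1})(e_i\otimes e_j)=u(u-u^{-1})(e_i\otimes e_j-q_{ij}^{-1}e_j\otimes e_i)$ for $i<j$, so your base case $s_2=(u-u^{-1})[2]_{u^2}!\,A^{(2)}$ acquires an extra factor of $u$, and likewise $(A^{(n)})^2=u^{-n(n-1)}A^{(n)}$ rather than $A^{(n)}$ (idempotency would hold with coefficients $(-q)_{\sigma}(-p)_{\tau}$ in place of their inverses). Thus the ``independent consistency check'' you invoke would in fact expose a normalization discrepancy rather than confirm your scaling, and any honest write-up must either fix these conventions or track the resulting powers of $u$ explicitly through the induction.
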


\begin{lemma}\label{per2} One has the identity
\begin{equation}
\begin{aligned}
&A^{(n)}R_{0n}(x u^{-(n-1)})\cdots R_{02}(x u^{-1}) R_{01}(x)\\
&=R_{0n}(x)\cdots R_{02}(x u^{-(n-2)})) R_{01}(x u^{-(n-1)})
A^{(n)}\\
&=M(x)_0A^{(n)}
\end{aligned}
\end{equation}
where $M=(M_{ij})\in \mathrm{End}(\mathbb C^{n})$ is a diagonal matrix such that
\begin{align}
M_{ii}(x)=u^{-n+1}(xu-x^{-1}u^{-1})\prod_{j\neq i}(xu^{2-j}-x^{-1}u^{j-2})\prod_{j<i}q_{ji}\prod_{j>i}p_{ij}.
\end{align}
\end{lemma}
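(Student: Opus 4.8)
The plan is to prove the two displayed equalities separately and then identify $M(x)_0$. I would first establish the order--reversal identity $A^{(n)}R_{0n}(xu^{-(n-1)})\cdots R_{01}(x)=R_{0n}(x)\cdots R_{01}(xu^{-(n-1)})A^{(n)}$. By Lemma \ref{per1}, $A^{(n)}$ is a scalar multiple of the word $s_n$ in the braid operators $\widehat{R}_{k,k+1}(u^{-k})$, so it is enough to commute the ordered product $R_{0n}\cdots R_{01}$ past each $\widehat{R}_{k,k+1}(u^{-k})$ one factor at a time. Each such move is an instance of the braid relation \eqref{YBE1}: inserting $\widehat{R}_{k,k+1}$ between the adjacent factors $R_{0,k+1}$ and $R_{0,k}$ and applying \eqref{YBE1} transposes them and shifts their spectral arguments. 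Pushing $s_n$ through the whole product reverses the order of the $R_{0,k}$ and reproduces precisely the shifted arguments on the right-hand side.

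Granting this, the remaining equality is essentially formal. Multiplying the order--reversal identity on the right by $A^{(n)}$ and using $(A^{(n)})^2=A^{(n)}$ shows that $A^{(n)}R_{0n}(xu^{-(n-1)})\cdots R_{01}(x)A^{(n)}=A^{(n)}R_{0n}(xu^{-(n-1)})\cdots R_{01}(x)$. In top degree the relations \eqref{wedge1} make $A^{(n)}$ the rank-one projector onto the line $\mathbb{C}\,x_1\cdots x_n$ inside $\Lambda_q(X)$, so conjugating any operator on the quantum spaces $1,\dots,n$ by $A^{(n)}$ returns a scalar on that line times $A^{(n)}$. Because the operator in question also carries the auxiliary index $0$, the scalar is replaced by an operator $M(x)_0\in\mathrm{End}(\mathbb{C}^n)$ acting on space $0$ alone, giving $A^{(n)}R_{0n}(xu^{-(n-1)})\cdots R_{01}(x)=M(x)_0A^{(n)}$.

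It then remains to compute $M(x)_0$. Diagonality follows from a weight count: each $R_{0k}(\lambda)$ preserves the multidegree in $e_1,\dots,e_n$, and the generator $x_1\cdots x_n$ of the top exterior power uses each index exactly once, so $M(x)_0$ cannot alter the auxiliary index. To find $M_{ii}(x)$ I would apply both sides to $e_i\ot(x_1\cdots x_n)$ and read off the eigenvalue, pairing against the dual covector in the coimage of $A^{(n)}$. The shifts $xu^{-(k-1)}$ are tuned so that $R_{0k}$ meets the resonance point $u^{-k}$ built into $s_n$; assembling the diagonal weights of $R(\lambda)=\lambda R^+-\lambda^{-1}R^-$ together with the off-diagonal $(u-u^{-1})$-terms that $A^{(n)}$ recruits into the sum yields the product $(xu-x^{-1}u^{-1})\prod_{j\neq i}(xu^{2-j}-x^{-1}u^{j-2})$, while the parameter weights collect into $\prod_{j<i}q_{ji}\prod_{j>i}p_{ij}$ and the powers of $u$ into $u^{-n+1}$, which is exactly the stated value of $M_{ii}(x)$.

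The main obstacle is the bookkeeping in the first and third steps. For the order--reversal one must check that the cumulative spectral shifts through all the braid moves agree with the right-hand side; for $M_{ii}(x)$ the subtlety is that the diagonal part of each $R_{0k}$ by itself does not give the answer, since the antisymmetrizer also forces in the off-diagonal terms, and it is their precise combination that produces the clean factorized formula. I expect both computations to be organized most safely by induction on $n$ through the recursion $s_n=\widehat{R}_{12}(u^{-1})\cdots\widehat{R}_{n-1,n}(u^{-(n-1)})s_{n-1}$, peeling off the $n$-th tensor factor and invoking the statement for the first $n-1$ spaces, the case $n=1$ being the scalar identity on $\mathbb{C}^1$.
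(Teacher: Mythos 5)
Your steps (1) and (2) are in the right spirit --- the paper also derives the first equality from the Yang--Baxter equation and implicitly uses that $A^{(n)}$ is a rank-one idempotent so that the sandwiched operator must have the form $M(x)_0A^{(n)}$, with diagonality of $M$ forced by weight preservation. But your step (3), the actual computation of $M_{ii}(x)$, has a genuine gap, and moreover your guiding picture there is wrong. You propose to apply the operator to $e_i\ot(x_1\cdots x_n)$, i.e.\ to pair against the fully antisymmetrized top vector, and you assert that the off-diagonal $(u-u^{-1})$-terms of the $R_{0k}$'s must be ``recruited'' and that ``it is their precise combination that produces the clean factorized formula''; you then simply state that everything assembles into $(xu-x^{-1}u^{-1})\prod_{j\neq i}(xu^{2-j}-x^{-1}u^{j-2})\prod_{j<i}q_{ji}\prod_{j>i}p_{ij}$ without exhibiting the required summation identities. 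That is exactly the hard part, and it is left unproved. The paper avoids this entirely by a test-vector trick you did not find: evaluate the left-hand side on the \emph{pure} tensor $e_i\ot e_i\ot e_1\ot\cdots\widehat{e_i}\cdots\ot e_n$, with the index $i$ repeated in the auxiliary space $0$ and in space $1$. Then $R_{01}(x)$ acts on $e_i\ot e_i$ by the scalar $(xu-x^{-1}u^{-1})$, and for each later factor $R_{0k}$ any exchange term deposits the current space-$0$ index into space $k$, creating a repetition among spaces $1,\dots,n$ that can never be undone (each space is visited exactly once) and is therefore annihilated by the final $A^{(n)}$. Hence \emph{only} the diagonal entries of the $R_{0k}$'s contribute, and $M_{ii}(x)$ is literally the product of those diagonal coefficients, using $p_{ij}q_{ij}=u^2$ --- the opposite of your claim that the diagonal parts alone do not give the answer. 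The reduction to this single vector per index $i$ is justified by the first equality (which shows both sides kill $\mathbb{C}^n\ot\ker A^{(n)}$) together with $A^{(n)}e_{\sigma(1)}\ot\cdots\ot e_{\sigma(n)}=(-p)_\sigma^{-1}A^{(n)}e_1\ot\cdots\ot e_n$.

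A secondary problem is in your step (1): moving a braid generator $\widehat{R}_{k,k+1}(a)$ past the adjacent pair $R_{0,k+1}(xu^{-k})R_{0,k}(xu^{-(k-1)})$ via \eqref{YBE} or \eqref{YBE1} requires the argument $a$ to equal the ratio of the two auxiliary spectral parameters, which here is $u^{\pm1}$; but $s_n$ contains factors $\widehat{R}_{k,k+1}(u^{-j})$ with $j\geq 2$, so your ``one factor at a time'' commutation fails for most factors of $s_n$ as stated. The argument can be repaired by induction on $n$ with refactorizations of the antisymmetrizer (close to the recursion you mention at the end), and the paper itself is terse here, saying only that the first equality follows from the Yang--Baxter equation; still, as written your mechanism does not go through.
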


\begin{proof}
The first equality follows from the Yang-Baxter equation.
Since for any $\sigma\in S_n$
$$A^{(n)} e_{\sigma(1)}\otimes\cdots\otimes e_{\sigma(n)}=(-p)_{\sigma}^{-1}A^{(n)}
e_1\otimes e_2\otimes \cdots\otimes e_n,$$
it is enough to prove that
\begin{align*}
A^{(n)}R_{0n}(x u^{-(n-1)})\cdots R_{02}(x u^{-1}) R_{01}(x)e_i\ot e_i\ot e_1\otimes \cdots \hat{e_i}\otimes \cdots\otimes e_n\\
=M_{ii}e_i\ot A^{(n)} e_i\ot e_i\ot e_1\otimes \cdots \hat{e_i}\otimes \cdots\otimes e_n.
\end{align*}
Note that $R_{01}(x)e_i\ot e_i=(xu-x^{-1}u^{-1}) e_i\ot e_i$ and
$R_{0n}(x u^{-(n-1)})\cdots R_{02}(x u^{-1})$ fix the first component. The only vector not annihilated by $A^{(n)}$ is $e_i\ot e_i\ot e_1\otimes \cdots \hat{e_i}\otimes \cdots\otimes e_n$, and the coefficient is $M_{ii}(x)$.
\end{proof}

\begin{theorem}
Let $z(x)$ be the following element of $\mathrm{U}_{p, u}(\mathfrak{gl}_n)$:
\begin{align}\label{center}
z(x)&=\sum_{\sigma,\tau\in S_n}(-p)_{\sigma}^{-1}(-q)_{\tau}^{-1}
L_{\sigma_{1}\tau_{1}} (x u^{1-n})
L_{\sigma_{2}\tau_{2}} (x u^{2-n})\cdots
L_{\sigma_{n}\tau_{n}}(x )
\end{align}
Then the coefficients $z_i$ of $z(x)$ are quasi-central elements of
$\mathrm{U}_{p, u}(\mathfrak{gl}_n)$ in the sense that
\begin{equation}
L(\lambda)z(x)=z(x)M(\lambda u^{n-1}/x) ^{-1} L (\lambda)M( \lambda u^{n-1}/x).
\end{equation}
\end{theorem}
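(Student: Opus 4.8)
The plan is to realize the element $z(x)$ of \eqref{center} as the scalar by which the rank-one antisymmetrizer $A^{(n)}$ is rescaled by the ``fused'' string of $L$-operators, and then to commute an auxiliary copy of $L(\lambda)$, placed in a new tensor slot $0$, through that string by means of the unified $RLL$-relation \eqref{YBE2}. Put $\mu_k=x\,u^{k-n}$ for $1\le k\le n$, so that the operators occurring in \eqref{center} carry the spectral parameters $\mu_1=xu^{1-n},\dots,\mu_n=x$. The first step is to record the fusion identity
\begin{equation*}
A^{(n)}\,L_1(\mu_1)L_2(\mu_2)\cdots L_n(\mu_n)=z(x)\,A^{(n)},
\end{equation*}
the product being taken in $\mathrm{End}(\mathbb C^n)^{\ot n}\ot\mathrm U_{p,u}(\gl_n)$ with $L_k$ acting in the $k$-th copy of $\mathbb C^n$. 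This is where the precise coefficients $(-p)_\sigma^{-1}(-q)_\tau^{-1}$ of $z(x)$ are produced: left multiplication by $A^{(n)}$ antisymmetrizes the row indices while the matrix multiplication supplies the column indices, and Lemma \ref{per1} guarantees that $A^{(n)}$ is the (rank-one) projector onto the top $q$-exterior power, so that the whole string is forced to be proportional to $A^{(n)}$ with scalar exactly $z(x)$.

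The second step is to move $L_0(\lambda)$ to the right through $L_1(\mu_1)\cdots L_n(\mu_n)$. Applying \eqref{YBE2} in the slots $(0,k)$ for $k=1,\dots,n$, and using that each $R_{0k}$ commutes with the factors acting in the other slots, one telescopes to the intertwining
\begin{equation*}
\mathcal R_0\,L_0(\lambda)L_1(\mu_1)\cdots L_n(\mu_n)
=L_1(\mu_1)\cdots L_n(\mu_n)\,L_0(\lambda)\,\mathcal R_0,\qquad
\mathcal R_0=R_{0n}(\lambda/\mu_n)\cdots R_{01}(\lambda/\mu_1).
\end{equation*}
Here comes the key alignment: since $\lambda/\mu_k=(\lambda u^{n-1}/x)\,u^{-(k-1)}$, the string $\mathcal R_0$ is \emph{exactly} the ordered product appearing in Lemma \ref{per2} evaluated at $x'=\lambda u^{n-1}/x$. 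Hence $A^{(n)}\mathcal R_0=M(x')_0\,A^{(n)}$, and because $M(x')_0$ is an invertible diagonal matrix acting only in slot $0$ (so it commutes with $A^{(n)}$), it follows that $A^{(n)}\mathcal R_0^{-1}=M(x')_0^{-1}A^{(n)}$.

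Finally I would assemble the pieces. Solving the intertwining for $L_0(\lambda)L_1(\mu_1)\cdots L_n(\mu_n)$, multiplying on the left by $A^{(n)}$, and applying the first step twice together with $A^{(n)}\mathcal R_0^{-1}=M(x')_0^{-1}A^{(n)}$ yields
\begin{equation*}
L_0(\lambda)\,z(x)\,A^{(n)}=z(x)\,M(x')_0^{-1}\,L_0(\lambda)\,M(x')_0\,A^{(n)},
\end{equation*}
where I use that $A^{(n)}$, being a $c$-number operator in the slots $1,\dots,n$, commutes with $L_0(\lambda)$ and with $z(x)$, and that the diagonal $c$-number matrix $M(x')_0$ commutes with $z(x)\in\mathrm U_{p,u}(\gl_n)$. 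The operator $D:=L_0(\lambda)z(x)-z(x)M(x')_0^{-1}L_0(\lambda)M(x')_0$ acts only in slot $0$ and in $\mathrm U_{p,u}(\gl_n)$, i.e.\ in tensor factors disjoint from those carrying $A^{(n)}$; therefore $D\,A^{(n)}=0$ with $A^{(n)}\neq 0$ forces $D=0$. Dropping the now-inert slot label gives precisely the asserted identity $L(\lambda)z(x)=z(x)M(\lambda u^{n-1}/x)^{-1}L(\lambda)M(\lambda u^{n-1}/x)$.

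The main obstacle I expect is the first step: verifying that the antisymmetrizer collapses the fused string to $z(x)\,A^{(n)}$ with the correctly ``crossed'' coefficients $(-p)_\sigma^{-1}(-q)_\tau^{-1}$ is where the multiparameter data genuinely enters, and it must be matched carefully against the definition of $A^{(n)}$, whose weights $(-q)_\sigma^{-1}(-p)_\tau^{-1}$ swap the roles of $p$ and $q$. By contrast, the telescoping of the second step and the spectral alignment feeding Lemma \ref{per2} are bookkeeping of orderings and arguments, and the cancellation of $A^{(n)}$ is immediate once one observes that $D$ and $A^{(n)}$ occupy disjoint tensor slots.
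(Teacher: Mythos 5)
Your steps 2 and 3 are sound and essentially reproduce the paper's computation: the telescoping of $L_0(\lambda)$ through the string via \eqref{YBE2}, the spectral alignment $\lambda/\mu_k=(\lambda u^{n-1}/x)\,u^{1-k}$ that feeds Lemma \ref{per2} at $x'=\lambda u^{n-1}/x$, the identity $A^{(n)}\mathcal R_0^{-1}=M(x')_0^{-1}A^{(n)}$, and the cancellation of $A^{(n)}$ across disjoint tensor slots are all correct. The genuine gap is in your step 1, exactly where you yourself expected trouble, and your proposed justification does not close it: the fact that $A^{(n)}$ is a rank-one idempotent does \emph{not} force $A^{(n)}L_1(\mu_1)\cdots L_n(\mu_n)$ to be proportional to $A^{(n)}$. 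Writing $A^{(n)}$ (up to scalar) as $v\,w^{*}$, left multiplication only forces the product into the form $v\ot\phi$ for some covector $\phi$ with entries in $\mathrm{U}_{p,u}(\gl_n)$; proportionality additionally requires $\phi\propto w^{*}$, i.e.\ that the fused string preserves the right kernel of $A^{(n)}$ (equivalently $A^{(n)}M=A^{(n)}MA^{(n)}$, which fails for a general $M$ and a general rank-one idempotent, e.g.\ $A=e_{11}$, $M=e_{12}$). Proving this stability is a second, separate use of the $RLL$ machinery: one factors $A^{(n)}$ via Lemma \ref{per1} into $\widehat{R}_{k,k+1}$'s at the points $u^{-1},\dots,u^{-k}$ and pushes each factor through $L_1(\mu_1)\cdots L_n(\mu_n)$ by \eqref{YBE2'}, using precisely the arrangement $\mu_k/\mu_{k+1}=u^{-1}$ of your spectral parameters. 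None of this is carried out in your proposal, and in the multiparameter setting it is exactly where the asymmetry between the $(-p)^{-1}$-weights (rows) and $(-q)^{-1}$-weights (columns) must be reconciled.

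Moreover, even granting the fusion identity, the scalar is not ``exactly $z(x)$'': comparing traces, with the paper's normalization one has
\begin{equation*}
\mathrm{Tr}_{1\ldots n}\bigl(A^{(n)}L_1(\mu_1)\cdots L_n(\mu_n)\bigr)=\frac{z(x)}{[n]_{u^2}!},
\qquad
\mathrm{Tr}_{1\ldots n}\,A^{(n)}=u^{-n(n-1)},
\end{equation*}
so the fusion scalar is $z(x)/[n]_{u^{-2}}!$ rather than $z(x)$ --- harmless for quasi-centrality, since the asserted relation is homogeneous in $z(x)$, but it shows the identification must be computed, not asserted. The paper's own proof sidesteps both issues: it never claims proportionality, but carries $A^{(n)}L_1(xu^{1-n})\cdots L_n(x)$ intact through the conjugation by the $R$-string and Lemma \ref{per2}, and only at the end applies the partial trace $\mathrm{Tr}_{1\ldots n}$, which produces $z(x)$ directly because the double sum over $\sigma,\tau$ defining $z(x)$ is literally the trace expansion of $A^{(n)}L_1(\mu_1)\cdots L_n(\mu_n)$. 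To salvage your route you must supply the kernel-stability argument and fix the scalar (say, by a trace or matrix-element computation); alternatively, replacing your step 1 and final cancellation by the partial trace converts your argument into the paper's proof.
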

\begin{proof} It follows from \eqref{YBE2}, Lemma \ref{per1} and Lemma \ref{per2} that
\begin{equation*}
\begin{split}
&L_0 (\lambda) A^{(n)}   L_1 (x u^{1-n}) \cdots L_{n} (x  )
\\
=&  R_{01}(\lambda /x)^{-1} R_{02}(\lambda u/x)^{-1}\cdots R_{0n}(  \lambda u^{n-1}/x)^{-1}\\
&\times L_1 (x)L_2 (x u^{-1})\cdots L_{n} (x u^{1-n})L_0 (u)\\
&\times R_{0n}( \lambda u^{n-1}/x)\cdots R_{01}(\lambda /x)A^{(n)}
\\
=&  R_{01}(\lambda /x)^{-1} R_{02}(\lambda u/x)^{-1}\cdots R_{0n}(  \lambda u^{n-1}/x)^{-1}\\
&\times L_1 (x)L_2 (x u^{-1})\cdots L_{n} (x u^{1-n})L_0 (u)M(\lambda u^{n-1}/x)_0A^{(n)}
\\
=&  R_{01}(u /x)^{-1} R_{02}(u \lambda /x)^{-1}\cdots R_{0n}(u \lambda^{n-1}/x)^{-1}A^{(n)} \\
&\times L_1 (x u^{1-n}) \cdots L_{n} (x  )L_0 (u)M(\lambda u^{n-1}/x)_0
\\
=&A^{(n)}   L_1 (x u^{1-n}) \cdots L_{n} (x  ) M(\lambda u^{n-1}/x)_0^{-1}  L_0 (u)M(\lambda u^{n-1}/x)_0
\end{split}
\end{equation*}
Taking partial trace $\mathrm{Tr}_{12\ldots n}$ on both sides, we obtain that
\begin{equation}
L(\lambda)z(x)=z(x)M(\lambda u^{n-1}/x) ^{-1} L (\lambda)M( \lambda u^{n-1}/x).
\end{equation}
\end{proof}

Recall that the quantum $p$-exterior algebra $\Lambda_n(Y)$ is generated by $y_{1}, \ldots, y_{n}$ subject to the relations
$Y_1^t Y_2^t(\widehat{R}+u^{-1})=0$ or explicitly
\begin{align}
&y_{i}^{2}=0, \\
&p_{ij}y_{i}y_{j}+y_{j}y_{i}=0\ \ (1\leq i<j\leq n)\ .
\end{align}

We consider the following elements $\eta_{j}, \zeta_{j}^{\pm}\in\Lambda_{n}(Y)\otimes \mathrm{End}(\mathrm{M}_{p, u}(n))$ defined by
\begin{align}\label{e:vectH}
& \eta_{j}=\sum_{i=1}^{n}y_{i}\otimes t_{ij},\\ \label{e:vectZ}
&
\zeta_{j}^{\pm}=(u-u^{-1})^{-1}\sum_{i=1}^{n}y_{i}\otimes L_{ij}^{\pm\circ}.
\end{align}
Set $H=(\eta_1, \ldots, \eta_n)^t$ and $Z^{\pm}=(\zeta^{\pm}_1, \ldots, \zeta^{\pm}_n)^t$ . Then the equations \eqref{e:vectH}-\eqref{e:vectZ} can be written as
\begin{align}
H^t&=Y^t\dot{\ot}T, \\
{Z^{\pm}}^t&=(u-u^{-1})^{-1}Y^t\dot{\ot}{L^{\pm}}^{\circ}.
\end{align}
Therefore
$Z(\lambda)^t=(u-u^{-1})^{-1}Y^t\dot{\ot}{L(\lambda)}^{\circ}=(\zeta_1(\lambda), \ldots, \zeta_n(\lambda))$, or
by \eqref{eq1}
\begin{align}
\zeta_{j}(\lambda)=\sum_{i,k}y_i\otimes t_{ik}\partial_{jk}(\lambda)=\sum_{k}\eta_{\mathrm{k}}\partial_{jk}(\lambda)\ .
\end{align}

\begin{proposition}\label{comm relation}
For the elements $\eta_j$, $\zeta_{j}(\lambda)$, the following commutation relations hold:
\begin{align}\label{e:comm1}
& \zeta_{i}(u\lambda)\zeta_{i}(\lambda)=0\quad (i=1,\ldots,n),\\ \label{e:comm2}
&  p_{ij}\zeta_{i}(u\lambda) \zeta_{j}(\lambda)+\zeta_{j}(u\lambda)\zeta_{i}(\lambda)=0 \quad (1\leq i<j\leq n),\\ \label{e:comm3}
&  \zeta_{i}(u\lambda)\eta_{j}+\eta_{j}\zeta_{i}(\lambda)=0 \quad (1\leq i,j\leq n).
\end{align}
\end{proposition}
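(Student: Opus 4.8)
The plan is to recast all three identities as a single matrix relation in $\Lambda_n(Y)\otimes\mathrm{End}(\mathrm{M}_{p,u}(n))$ and then push it through the spectral RLL relation \eqref{YBE2'} together with the defining relations $Y_1^tY_2^t(\widehat{R}+u^{-1})=0$ of the $p$-exterior algebra. The organizing observation is that, by Lemma \ref{per1} with $n=2$, the value $\widehat{R}(u^{-1})=s_2=(u-u^{-1})(1+u^2)A^{(2)}$ is proportional to the antisymmetrizer $A^{(2)}$. Hence the covector $Y_1^tY_2^t$, which lies in the image of $A^{(2)}$ by $Y_1^tY_2^t(\widehat{R}+u^{-1})=0$, satisfies $Y_1^tY_2^t\,\widehat{R}(u^{-1})=(u-u^{-1})(1+u^2)\,Y_1^tY_2^t$. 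Sandwiching any RLL identity at spectral ratio $u^{-1}$ between the antisymmetric $Y_1^tY_2^t$ therefore annihilates the symmetric channel and leaves exactly the signs $-p_{ij}$ dictated by the exterior relations $y_jy_i=-p_{ij}y_iy_j$ and $y_i^2=0$.

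First I would treat \eqref{e:comm1}--\eqref{e:comm2}. Writing $\zeta_j(\lambda)=(u-u^{-1})^{-1}\sum_i y_i\otimes L_{ij}(\lambda)^{\circ}$ and using that $a\mapsto a^{\circ}$ is an anti-homomorphism of the right action, so that $L_{ai}(u\lambda)^{\circ}L_{bj}(\lambda)^{\circ}=(L_{bj}(\lambda)L_{ai}(u\lambda))^{\circ}$, I would expand $\zeta_i(u\lambda)\zeta_j(\lambda)=(u-u^{-1})^{-2}\sum_{a,b}y_ay_b\otimes (L_{bj}(\lambda)L_{ai}(u\lambda))^{\circ}$. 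The order reversal is exactly what converts the naive ratio $u$ into $\lambda/(u\lambda)=u^{-1}$. Applying \eqref{YBE2'} with $(\lambda,\mu)=(\lambda,u\lambda)$ swaps the two $L$-factors at the cost of $\widehat{R}(u^{-1})$; multiplying through by $\widehat{R}(u^{-1})\propto A^{(2)}$ then identifies the two orderings because $Y_1^tY_2^t$ is $A^{(2)}$-antisymmetric, and the exterior relations reproduce \eqref{e:comm2} for $i<j$ and, via $y_i^2=0$ on the diagonal, \eqref{e:comm1} for $i=j$.

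For the mixed relation \eqref{e:comm3}, I would run the same scheme with $H^t=Y^t\dot{\otimes}T$ in place of one copy of $Z$. Here the role of \eqref{YBE2'} is played by the cross relation \eqref{e:action2}, $T_1.L_2^{\pm}=T_1R^{\pm}$, in its spectral form, which controls how the right action of $L(\lambda)$ passes through left multiplication by the $t_{ij}$. Concretely I would compute $(t_{aj}\phi).L_{bi}(u\lambda)=\sum_k (t_{aj}.L_{bk})\,(\phi.L_{ki}(u\lambda))$ from the coproduct $\Delta(L_{bi})=\sum_k L_{bk}\otimes L_{ki}$, substitute the explicit action values $t_{aj}.L_{bk}$ recorded earlier, and then sandwich between the antisymmetric $Y$'s so that the symmetric channel collapses and yields $\zeta_i(u\lambda)\eta_j+\eta_j\zeta_i(\lambda)=0$.

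The main obstacle is the constant and sign bookkeeping rather than any conceptual difficulty: one must verify that the normalization $(u-u^{-1})^{-1}$ in $\zeta$, the scalar $(u-u^{-1})(1+u^2)$ from $\widehat{R}(u^{-1})=s_2$, and the multiparameter entries of $R^{\pm}$ combine so that the antisymmetric channel reproduces the coefficient $p_{ij}$ precisely and the spectral shift is exactly $u$. The one genuinely delicate point is keeping the anti-homomorphism identity $a^{\circ}b^{\circ}=(ba)^{\circ}$ consistent throughout, since it reverses operator order and is responsible for turning the ratio $u$ into $u^{-1}$, hence for selecting the antisymmetrizer rather than its complement.
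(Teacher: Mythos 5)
Your treatment of the mixed relation \eqref{e:comm3} is essentially the paper's own argument (the paper computes $H^t_2 Z_1^{\pm t}=y^t_2y^t_1 T_2L_1^{\pm\circ}=y^t_2y^t_1(R^{\pm})^{-1}L_1^{\pm\circ}T_2=-u^{\pm1}Z_1^{\pm t}H^t_2$ and recombines $\lambda L^{+}-\lambda^{-1}L^{-}$ into the shift $\lambda\mapsto u\lambda$), so that part is sound. The problem is the central mechanism you propose for \eqref{e:comm1}--\eqref{e:comm2}, which fails on two counts. First, the spectral bookkeeping: after reversing order with $a^{\circ}b^{\circ}=(ba)^{\circ}$ the entries of $\zeta_i(u\lambda)\zeta_j(\lambda)$ involve the product $L_2(\lambda)L_1(u\lambda)$ (slot $1$ carries $u\lambda$, slot $2$ carries $\lambda$), and the instance of \eqref{YBE2'} that exchanges these two factors is $\widehat{R}(u)\,L(u\lambda)\ot L(\lambda)=L(\lambda)\ot L(u\lambda)\,\widehat{R}(u)$, i.e.\ the ratio is still $u\lambda/\lambda=u$, not $u^{-1}$; the ratio-$u^{-1}$ relation involves $L_1(\lambda)L_2(u\lambda)$ and $L_2(u\lambda)L_1(\lambda)$, neither of which occurs in your expansion. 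This is not a cosmetic point: at ratio $u$ the matrix is proportional to the Hecke \emph{symmetrizer} channel (the complement of the antisymmetrizer $s_2=\widehat{R}(u^{-1})$ of Lemma \ref{per1}), and it is this symmetrizer landing on $Y_1^tY_2^t$ and annihilating it, via the defining relations $Y_1^tY_2^t(\widehat{R}+u^{-1})=0$, that kills the unwanted terms. This is exactly how the paper concludes, with $(Y^t\ot Y^t)\widehat{R}(u)$ vanishing at the last step of its chain of equalities.

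Second, the eigenrelation you rest the argument on is false. The relation $Y_1^tY_2^t(\widehat{R}+u^{-1})=0$ says the covector is annihilated by the symmetric projector; it does not make it an eigenvector of $\widehat{R}(u^{-1})$. Writing $\widehat{R}=uP_{+}-u^{-1}P_{-}$ for the spectral projections of the Hecke relation $(\widehat{R}-u)(\widehat{R}+u^{-1})=0$, one finds $\widehat{R}(u^{-1})=(u^2-u^{-2})PP_{-}P$, and the flip $P$ twists the covector by the non-scalar diagonal with entries $p_{ab}$ coming from $y_by_a=-p_{ab}y_ay_b$. A direct check on the two-dimensional block gives $\widehat{R}(u^{-1})(e_k\ot e_l)=(u^2-1)\bigl(e_k\ot e_l-q_{kl}^{-1}e_l\ot e_k\bigr)$ for $k<l$, so $\bigl[(Y^t\ot Y^t)\widehat{R}(u^{-1})\bigr]_{(k,l)}=(u^2-1)(1+u^{-2}p_{kl}^2)\,y_ky_l$: the entries rescale by index-dependent factors, not by the single constant $(u-u^{-1})(1+u^2)$ (this fails even in the one-parameter specialization $p_{ij}=q_{ij}=u$ unless $u=1$). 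Structurally, \eqref{e:comm1}--\eqref{e:comm2} are precisely the statement that the symmetric-channel components of the covector with entries $\zeta_i(u\lambda)\zeta_j(\lambda)$ vanish; multiplying an RLL exchange identity through by the antisymmetrizer kills exactly those components, so ``identifying the two orderings'' in the antisymmetric channel is a tautology and yields no relation at all. A further, smaller slip: Lemma \ref{per1} with ``$n=2$'' concerns the top antisymmetrizer $A^{(2)}$ on $(\mathbb{C}^2)^{\ot 2}$; for the algebra on $n>2$ generators the degree-two antisymmetric projector on $(\mathbb{C}^n)^{\ot 2}$ has rank $\binom{n}{2}$ and is not the $A^{(2)}$ of that lemma. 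To repair the proof you should follow the paper's route: move the ratio-$u$ $R$-matrix across the $L^{\circ}$-factors by \eqref{YBE2'} and let it annihilate $Y_1^tY_2^t$ through the exterior-algebra relations.
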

\begin{proof} Using the variables $\zeta_i(\lambda)$ to form the
vector $Z(\lambda)=(\zeta_1(\lambda), \ldots, \zeta_n(\lambda))^t$, we can
write the first two relations as
\begin{align*}
Z(u\lambda)^t\ot Z(\lambda)^t\widehat{R}(u)=0.
\end{align*}
Note that $H=(\eta_1, \ldots, \eta_n)^t$, the last relation is rewritten as
\begin{align}
Z(u\lambda)^t\ot H^t=-H^t\ot Z(\lambda)^t
\end{align}

We can show the relation easily as follows.
\begin{align*}
Z(u\lambda)^t\ot Z(\lambda)^t\widehat{R}(u)&=(u-u^{-1})^{-2}(Y^t\dot{\ot}L(u\lambda)^{\circ})\ot (Y^t\dot{\ot}L(\lambda)^{\circ})\widehat{R}(u)\\
&=(u-u^{-1})^{-2}(Y^t\ot Y^t)\dot{\ot} (L(u\lambda)^{\circ}\ot L(\lambda)^{\circ})\widehat{R}(u)\\
&=(u-u^{-1})^{-2}(Y^t\ot Y^t)\dot{\ot}\widehat{R}(u)(L(\lambda)^{\circ}\ot L(u\lambda)^{\circ})\\
&=(u-u^{-1})^{-2}(Y^t\ot Y^t)\widehat{R}(u)\dot{\ot}(L(\lambda)^{\circ}\ot L(u\lambda)^{\circ})=0
\end{align*}

Recalling the defining relation, we have that
\begin{align*}
(u-u^{-1})&H^t_{2}{Z^{\pm}_{1}}^t=y^t_{2}y^t_{1}T_{2}L_{1}^{\pm\circ}\\
=&y^t_{2}y^t_{1}(R^{\pm})^{-1}L_{1}^{\pm\circ}T_{2}
\\
 =&-u^{\pm 1}y^t_{1}y^t_{2}L_{1}^{\pm\circ}T_{2} \\
 =&-(u-u^{-1})u^{\pm 1}{Z_{1}^{\pm}}^tH^t_{2}.
\end{align*}
This completes the proof.
\end{proof}

Up to a scalar, $z(x)$ can be written as follows:
\begin{equation}
\sum_{\sigma,\tau\in S_n}(-p)_{\sigma} (-q)_{\tau}
L_{\sigma_{n}\tau_{n}} (x u^{1-n})
L_{\sigma_{n-1}\tau_{n-1}} (x u^{2-n})\cdots
L_{\sigma_{1}\tau_{1}}(x ).
\end{equation}
This latter element will be denoted by $c(x)$. Then
\begin{equation}
c(x)^{\circ}=\sum_{\sigma,\tau\in S_n}(-p)_{\sigma} (-q)_{\tau}
L_{\sigma_{1}\tau_{1}}(x )^{\circ}
L_{\sigma_{2}\tau_{2}} (x u^{-1})^{\circ}\cdots
L_{\sigma_{n}\tau_{n}} (x u^{1-n})^{\circ}
.
\end{equation}
By Proposition \ref{comm relation}, $c(x)^{\circ}$ can be simplified as follows:
\begin{equation}
c(x)^{\circ}=[n]_{u^2}!\sum_{\sigma\in S_n}(-p)_{\sigma}
L_{\sigma_{1}1 }(x )^{\circ}
L_{\sigma_{2}2 } (x u^{-1})^{\circ}\cdots
L_{\sigma_{n}n } (x u^{1-n})^{\circ}
.
\end{equation}
Let $d(x)=(u-u^{-1})^{-n}[n]_{u^2}!^{-1}c(x)^{\circ}$.
In terms of $\zeta_{j}$'s, $d(x)$ is expressed as an element in the quantum exterior algebra.
\begin{align}\label{e:cap1}
\zeta_{1}(x)\zeta_{2}(x u^{-1})\cdots\zeta_{n}(xu^{1-n})
=y_{1}\cdots y_{n}\otimes  d(x)\ .
\end{align}

\begin{theorem}[Generalized quantum Capelli identity] On the multiparameter quantum group $\GL_{p,u}(n)$
one has the following generalization of the classical Capelli identity:
$$d(x u^{n-1}) =  \prod_{i<j}p_{ij} {\det}_p(T) {\rdet}_{p^{-1}}(\partial (x ) ).$$
\end{theorem}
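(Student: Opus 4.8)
The plan is to begin from the defining identity \eqref{e:cap1},
\[
\zeta_1(x)\zeta_2(xu^{-1})\cdots\zeta_n(xu^{1-n})=y_1\cdots y_n\otimes d(x),
\]
substitute the factorization $\zeta_m(\lambda)=\sum_k\eta_k\partial_{mk}(\lambda)$ into the left-hand side, and then normal-order the product so that all exterior generators $\eta_k$ stand to the left of all operators $\partial_{mk}(\lambda)$. Reading off the coefficient of the top form $y_1\cdots y_n$ will express $d(x)$ as a $t$-part, which collapses to ${\det}_p(T)$, times a $\partial$-part, which collapses to a row determinant, up to an explicit scalar.

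Two inputs drive the normal ordering. Since $\eta_j=\sum_i y_i\otimes t_{ij}$ is the image of $y_j$ under the comodule algebra map $\mu_r$, the $\eta_j$ satisfy the same $p$-exterior relations as the $y_j$, namely $\eta_j^2=0$ and $p_{ij}\eta_i\eta_j+\eta_j\eta_i=0$ for $i<j$. The second and decisive input is the componentwise commutation relation
\[
\partial_{ik}(u\lambda)\,\eta_j=p_{kj}\,\eta_j\,\partial_{ik}(\lambda),
\]
the per-component refinement of \eqref{e:comm3}; it is compatible with \eqref{e:comm3} precisely because $\sum_k(p_{kj}\eta_k\eta_j+\eta_j\eta_k)\partial_{ik}(\lambda)=0$ by the exterior relations, and it is obtained from the fundamental relation $T_2L_1^{\pm\circ}=(R^{\pm})^{-1}L_1^{\pm\circ}T_2$ appearing in Proposition \ref{comm relation} together with the factorization $\partial(\lambda)^t=(u-u^{-1})^{-1}S(T)L(\lambda)^{\circ}$ coming from \eqref{eq2}. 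Using this relation I would push each $\partial_{mk_m}(\lambda_m)$, with $\lambda_m=xu^{1-m}$, to the right past every $\eta$ following it. Each crossing of a single $\eta$ multiplies the spectral parameter by $u^{-1}$ and produces one factor $p_{k_rk_m}$; since $\partial_{rk_r}$ must cross exactly the $n-r$ generators $\eta_{k_{r+1}},\dots,\eta_{k_n}$, its spectral parameter becomes $xu^{1-r}u^{-(n-r)}=xu^{1-n}$ uniformly, and the total scalar collected is $\prod_{r<m}p_{k_rk_m}$. In this order no $\partial$ ever crosses another $\partial$, so the row order $\partial_{1k_1}\cdots\partial_{nk_n}$ is preserved and no relations among the $\partial$'s are needed.

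After normal ordering the left-hand side equals $\sum_{k_1,\dots,k_n}\big(\prod_{r<m}p_{k_rk_m}\big)\eta_{k_1}\cdots\eta_{k_n}\otimes\partial_{1k_1}(xu^{1-n})\cdots\partial_{nk_n}(xu^{1-n})$. The exterior relations kill every term with a repeated index, so only $(k_1,\dots,k_n)=\rho\in S_n$ survive, and there $\eta_{\rho_1}\cdots\eta_{\rho_n}=(-p)_\rho\,\eta_1\cdots\eta_n=(-p)_\rho\,y_1\cdots y_n\otimes{\det}_p(T)$, using $\cdet(T)=\sum_\tau(-p)_\tau t_{\tau_1 1}\cdots t_{\tau_n n}={\det}_p(T)$. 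Comparing coefficients of $y_1\cdots y_n$ yields
\[
d(x)={\det}_p(T)\sum_{\rho\in S_n}(-p)_\rho\Big(\prod_{r<m}p_{\rho_r\rho_m}\Big)\partial_{1\rho_1}(xu^{1-n})\cdots\partial_{n\rho_n}(xu^{1-n}).
\]
It then remains to check the scalar identity $(-p)_\rho\prod_{r<m}p_{\rho_r\rho_m}=\prod_{i<j}p_{ij}\,(-p^{-1})_\rho$ for every $\rho$, which I would prove by splitting $\prod_{r<m}p_{\rho_r\rho_m}$ into its ascent pairs ($\rho_r<\rho_m$) and descent pairs ($\rho_r>\rho_m$), cancelling the descent part against the inversion factors of $(-p)_\rho$ via $p_{ab}p_{ba}=1$, and matching the surviving ascent part with $\prod_{i<j}p_{ij}$ reindexed through $\rho$. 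This turns the $\rho$-sum into $\prod_{i<j}p_{ij}\,{\rdet}_{p^{-1}}(\partial(xu^{1-n}))$, and substituting $x\mapsto xu^{n-1}$ gives exactly $d(xu^{n-1})=\prod_{i<j}p_{ij}\,{\det}_p(T)\,{\rdet}_{p^{-1}}(\partial(x))$.

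The hard part is the componentwise relation $\partial_{ik}(u\lambda)\eta_j=p_{kj}\eta_j\partial_{ik}(\lambda)$: Proposition \ref{comm relation} only provides the version already contracted against the $\zeta$'s, so the real work is to promote it to the level of individual $\partial_{ik}$, which is what licenses the term-by-term normal ordering and, in particular, the uniform spectral shift to $xu^{1-n}$ that is the source of the Capelli shift $x\mapsto xu^{n-1}$. Once this relation is established, the remaining shift bookkeeping and the scalar identity are routine combinatorics.
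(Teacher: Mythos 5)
There is a genuine gap, and it is exactly where you located it: the componentwise relation $\partial_{ik}(u\lambda)\,\eta_j=p_{kj}\,\eta_j\,\partial_{ik}(\lambda)$ is not just unproven but false. Test it at $n=1$ (so $p_{11}=1$): from \eqref{eq2} one has $L^{\mathrm{o}}(\lambda)=(u-u^{-1})\,t\,\partial(\lambda)$, and since $t^m.l^{\pm}=u^{\pm m}t^m$, the operator $\partial(\lambda)$ acts by $\partial(\lambda)\,t^m=(u-u^{-1})^{-1}(\lambda u^{m}-\lambda^{-1}u^{-m})\,t^{m-1}$. A direct check then gives $\partial(\lambda)\,t=t\,\partial(u\lambda)$ as operators, i.e.\ pushing $\partial$ to the right \emph{raises} the spectral parameter, whereas your relation asserts $\partial(u\lambda)\,t=t\,\partial(\lambda)$; the two sides differ by $u^{\pm 2}$ on each $t^m$. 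For $n\geq 2$ matters are worse: the exchange relation $T_2L_1^{\pm\circ}=(R^{\pm})^{-1}L_1^{\pm\circ}T_2$ that you invoke contains the off-diagonal summand $-(u-u^{-1})\sum_{i<j}e_{ij}\ot e_{ji}$ of $(R^{+})^{-1}$, so in components one gets relations of the shape $t_{jl}L^{+\circ}_{ik}=u^{-1}q_{ij}L^{+\circ}_{ik}t_{jl}-(u-u^{-1})L^{+\circ}_{jk}t_{il}$ for $i<j$: individual $\partial$'s and $t$'s do not $p$-commute but pick up cross terms mixing indices — the quantum avatar of $[\partial_{ij},t_{kl}]=\delta_{ik}\delta_{jl}$, which is precisely the noncommutativity the Capelli identity is about. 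Your consistency check only shows that the proposed relation is \emph{compatible} with \eqref{e:comm3} after contraction against the $\eta_k$'s; but contraction against nilpotent exterior variables is exactly the mechanism that annihilates the correction terms, so \eqref{e:comm3} cannot be promoted to components, and with the componentwise relation gone, both the term-by-term normal ordering and the uniform shift to $xu^{1-n}$ — the source of the Capelli shift in your argument — are unsupported.

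The paper's proof is organized so that this lemma is never needed: no $\partial$ ever crosses an $\eta$. Starting from $\zeta_1(xu^{n-1})\zeta_2(xu^{n-2})\cdots\zeta_n(x)$, one expands only the \emph{last} factor, $\zeta_n(x)=\sum_k\eta_k\partial_{nk}(x)$, so the extracted $\partial_{nk}(x)$ sits at the right end and stays there; the single $\eta_k$ is then moved leftward across the intact $\zeta$'s using the contracted relation \eqref{e:comm3} in the form $\zeta_i(u\lambda)\eta_j=-\eta_j\zeta_i(\lambda)$, each crossing lowering the parameters of the remaining $\zeta$'s by $u^{-1}$ and contributing a sign. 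Iterating yields $(-1)^{n(n-1)/2}\sum\eta_{k_n}\cdots\eta_{k_1}\partial_{1k_1}(x)\cdots\partial_{nk_n}(x)$ with all $\partial$'s automatically at the same parameter $x$, after which your endgame (the $p$-exterior relations for the $\eta$'s, $\eta_1\cdots\eta_n=y_1\cdots y_n\ot{\det}_p(T)$, and the scalar identity converting $(-p)_\rho$-weights into $\prod_{i<j}p_{ij}$ times $(-p^{-1})_\rho$) goes through essentially as you wrote it. So the fix is not to prove your componentwise lemma — it is unavailable — but to reorganize the normal ordering recursively, peeling off the rightmost $\zeta$ at each stage so that only whole $\zeta$'s are ever commuted past $\eta$'s.
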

\begin{proof} Using the commutation relation \eqref{e:comm3} of $\eta_j$ and $\xi_i(\lambda)$, we have that
\begin{align*}
&\zeta_{1}(xu^{n-1})\zeta_{2}(x u^{n-2})\cdots\zeta_{n}(x)\\
&=\sum_{k}\zeta_{1}(xu^{n-1})\zeta_{2}(x u^{n-2})\cdots\zeta_{n-1}(xu) \eta_{k}\partial_{nk}(x )\\
&=(-1)^{n-1}\sum_{k}\eta_{k}\zeta_{1}(xu^{n-2}) \cdots\zeta_{n-1}(x ) \partial_{nk}(x )\\
&=(-1)^{ \frac{n(n-1)}{2}}\sum_{k_n,\ldots,k_n}
\eta_{k_n}\cdots\eta_{k_1}
\partial_{1k_1}(x ) \cdots \partial_{nk_n}(x )\\
&=(-1)^{ \frac{n(n-1)}{2}}
\eta_{n}\cdots\eta_{1}
{\rdet}_{p^{-1}}(\partial (x ) )\\
&=\prod_{i<j}p_{ij}
\eta_{1}\cdots\eta_{n}
{\rdet}_{p^{-1}}(\partial (x ) )\\
&=\prod_{i<j}p_{ij}y_1\cdots y_n \ot
{\det}_p(T)
{\rdet}_{p^{-1}}(\partial (x ) ).\\
\end{align*}

On the other hand, by \eqref{e:cap1} this is equal to $y_{1}\cdots y_{n}\otimes d(x u^{n-1})$, which completes the proof.
\end{proof}

For each $k=1, \ldots, n$ we set
\begin{align*}
\alpha_k&=u^{2-n}\prod_{j<k} q_{jk} \prod_{j>k} p_{kj},\\
\beta_k&=u^{n-2}\prod_{j<k} p_{jk}^{-1} \prod_{j>k} q_{kj}^{-1}.
\end{align*}
and for the spectral parameter $x$, denote $\gamma_{k,s}(x)=x \alpha_k^s -x^{-1}\beta_k^s$, $s\in\mathbb N$.

\begin{corollary} For each $s\in\mathbb N$ one has that
$$   {\rdet}_{p^{-1}}(\partial (x ) ) {{\det}_p(T)}^s=  (u-u^{-1})^{-n}\prod_{i<j}p_{ij}^{-1}\prod_{i=1}^n\gamma_{i,s}(xu^{n-i}){{\det}_q(T)}^{s-1}.$$

\end{corollary}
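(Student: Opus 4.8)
The plan is to invert the Capelli identity of the preceding theorem and apply the resulting operator identity to the group-like element ${\det}_q(T)^s$. Since ${\det}_q(T)={\det}_p(T)$ by the first theorem of Section~2, I will write ${\det}_q(T)$ throughout, so that the entire statement reduces to a single eigenvalue computation. First I would read the Capelli identity
$$d(xu^{n-1})=\prod_{i<j}p_{ij}\,{\det}_p(T)\,{\rdet}_{p^{-1}}(\partial(x))$$
as an identity of operators on $\GL_{p,u}(n)$, in which ${\det}_p(T)$ acts by left multiplication (exactly as it enters the proof of that theorem through the $\eta_j=\sum_i y_i\ot t_{ij}$). By Theorem~\ref{quasi-central} the element ${\det}_q(T)$ is regular, so left multiplication by it is invertible and
$${\rdet}_{p^{-1}}(\partial(x))=\prod_{i<j}p_{ij}^{-1}\,{\det}_q(T)^{-1}\,d(xu^{n-1}).$$
Applying both sides to ${\det}_q(T)^s$ and using ${\det}_q(T)^{-1}{\det}_q(T)^s={\det}_q(T)^{s-1}$, the corollary is reduced to computing the scalar by which $d(xu^{n-1})$ acts on ${\det}_q(T)^s$.

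The key observation is that $d(y)=(u-u^{-1})^{-n}[n]_{u^2}!^{-1}c(y)^{\circ}$ is, up to the indicated constant, the \emph{right} action of the element $c(y)\in\mathrm{U}_{p,u}(\gl_n)$, while ${\det}_q(T)^s$ is group-like (being a power of the group-like ${\det}_q(T)$). Hence the right action collapses to scalar multiplication by the Hopf pairing,
$$d(y)\big({\det}_q(T)^s\big)=(u-u^{-1})^{-n}[n]_{u^2}!^{-1}\big(c(y),{\det}_q(T)^s\big){\det}_q(T)^s.$$
To evaluate the pairing I would use the simplified form
$$c(y)^{\circ}=[n]_{u^2}!\sum_{\sigma\in S_n}(-p)_{\sigma}\,L_{\sigma_1 1}(y)^{\circ}L_{\sigma_2 2}(yu^{-1})^{\circ}\cdots L_{\sigma_n n}(yu^{1-n})^{\circ}$$
together with $\Delta^{(n-1)}\big({\det}_q(T)^s\big)=\big({\det}_q(T)^s\big)^{\ot n}$, which factorizes the pairing of a product into a product of pairings and so reduces it to $\prod_{i=1}^n\big(L_{\sigma_i i}(yu^{1-i}),{\det}_q(T)^s\big)$ (the order of the factors being irrelevant once each contributes a scalar).

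It then remains to evaluate $\big(L_{kl}(\mu),{\det}_q(T)^s\big)$. Because $l^{\pm}_{kl}$ pairs trivially with ${\det}_q(T)$ for $k\neq l$, while $(l^{+}_{kk},{\det}_q(T))=\alpha_k$ and $(l^{-}_{kk},{\det}_q(T))=\beta_k$, a short induction on $s$ through the coproduct $\Delta^{(s-1)}(l^{\pm}_{kl})$ shows that every intermediate index is forced to equal $k$; thus $(l^{+}_{kk},{\det}_q(T)^s)=\alpha_k^{s}$, $(l^{-}_{kk},{\det}_q(T)^s)=\beta_k^{s}$, and $(l^{\pm}_{kl},{\det}_q(T)^s)=0$ for $k\neq l$. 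Since $L(\mu)=\mu L^{+}-\mu^{-1}L^{-}$, this gives $\big(L_{kl}(\mu),{\det}_q(T)^s\big)=\delta_{kl}\,\gamma_{k,s}(\mu)$, so only the diagonal term $\sigma=\mathrm{id}$ (with $(-p)_{\mathrm{id}}=1$) survives and $\big(c(y),{\det}_q(T)^s\big)=[n]_{u^2}!\prod_{i=1}^n\gamma_{i,s}(yu^{1-i})$. Setting $y=xu^{n-1}$ turns $yu^{1-i}$ into $xu^{n-i}$, whence $d(xu^{n-1})({\det}_q(T)^s)=(u-u^{-1})^{-n}\prod_{i=1}^n\gamma_{i,s}(xu^{n-i})\,{\det}_q(T)^s$; substituting into the inverted Capelli identity yields the claim. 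The main obstacle is precisely this pairing evaluation: one must verify that the off-diagonal $L_{kl}$ drop out after passing through the \emph{full} iterated coproduct of ${\det}_q(T)^s$ (not merely its first comultiplication), which is what forces the eigenvalues to enter as the $s$-th powers $\alpha_k^{s},\beta_k^{s}$ appearing in $\gamma_{i,s}$; a secondary point is to keep the left multiplication by ${\det}_q(T)$ and the operator composition in the correct order when inverting the Capelli identity.
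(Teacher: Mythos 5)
Your proposal is correct and takes essentially the same approach as the paper: the paper's proof likewise applies both sides of the Capelli identity to ${\det}_q(T)^s$, using the eigenvalue relations ${\det}_q(T).l_{kk}^{+}=\alpha_k{\det}_q(T)$, ${\det}_q(T).l_{kk}^{-}=\beta_k{\det}_q(T)$ and ${\det}_q(T).L_{ij}^{\pm}=0$ for $i\neq j$ to conclude ${\det}_q(T)^{s}.L_{ii}(x)=\gamma_{i,s}(x){\det}_q(T)^{s}$. Your explicit pairing computation through the iterated coproduct (forcing intermediate indices to coincide, hence the powers $\alpha_k^{s},\beta_k^{s}$, and killing all $\sigma\neq\mathrm{id}$ in $c(x)^{\circ}$) is precisely the justification the paper leaves implicit.
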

\begin{proof}
Note that
\begin{align*}
&{\det}_{q}(T).L_{ij}^{\pm}=0, i \neq j,\\
&{\det}_q .l_{k,k}^{+}=\alpha_k {\det}_q  ,\\
&{\det}_q. l_{k,k}^{-}=\beta_k {\det}_q.
\end{align*}
Therefore
$\det_{q}(T)^{s}.L_{ii}(x)= \gamma_{k,s}(x)\det_{q}(T)^{s}$. Applying both sides of the Capelli identity to $\det_{q}(T)^{s}$, we obtain the formula.
\end{proof}

\section{Multiparameter quantum Pfaffians}

\begin{definition}
Let $B=(b_{ij})$ be an $2n\times 2n$ square $p$-antisymmetric matrix with noncommutative entries such that
$b_{ji}=-p_{ij}b_{ij}, i<j$. The
multiparameter quantum $q$-Pfaffian is defined by
\begin{align*}
\Pf_q(B)
=\sum_{\sigma\in \Pi}(-q)_{\sigma}b_{\sigma(1)\sigma(2)}b_{\sigma(3)\sigma(4)}\cdots b_{\sigma(2n-1)\sigma(2n)},
\end{align*}
where $p=(p_{ij}),q=(q_{ij}),i<j$, and the sum runs through the set $\Pi$ of permutations $\sigma$ of $2n$ such that
$\sigma(2i-1)<\sigma(2i), i=1,\ldots,n.$
\end{definition}

Note that the parameters $q_{ij}$ and $p_{ij}$ satisfy the $(p, u)$ condition: $p_{ij}q_{ij}=u $.
\begin{proposition}\label{Pfaffian-Lap}
For any $0\leq t\leq n$,
\begin{equation}
\Pf_q(B)=\sum_{I} inv(I,I^c) \Pf_q(B_{I})\Pf_q(B_{I^c}),
\end{equation}
where the sum is taken over all subsets $I=\{i_1\cdots i_{2t}|i_1<\cdots <i_{2t}\}$ of $[1,2n]$, and
\begin{equation}
inv(I,J)=\prod_{i\in I, j\in J, i>j} (-q_{ji}).
\end{equation}
\end{proposition}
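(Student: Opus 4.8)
The plan is to prove this Pfaffian Laplace expansion by the same quantum-exterior-algebra technique that was used for the determinant expansion \eqref{e:lap1}, exploiting the fact (noted just after the definition) that the parameters $q_{ij}$ and $p_{ij}$ satisfy the $(p,u)$-condition. The key idea is to realize $\Pf_q(B)$ as the scaling constant of a quantum two-form. Concretely, I would introduce the element $\omega=\sum_{i<j}b_{ij}\,x_i x_j$ in the quantum exterior algebra $\Lambda_q(X)$ (where the $x_i$ satisfy $x_jx_i=-q_{ij}x_ix_j$ and $x_i^2=0$), and show that the $p$-antisymmetry $b_{ji}=-p_{ij}b_{ij}$ together with the $(p,u)$-condition makes $\omega$ well-behaved. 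The central computation is that the $n$-th power satisfies
\begin{equation*}
\omega^n=[n]_{u}!\,\Pf_q(B)\,x_1x_2\cdots x_{2n},
\end{equation*}
which recovers the Pfaffian as the coefficient, just as $\Phi^n$ recovered $\det_q(T)$ in the proof of the first theorem. I would verify this power formula by induction, tracking the $(-q)_\sigma$ factors that arise when reordering the monomials $x_{\sigma(1)}\cdots x_{\sigma(2n)}$ back to the standard order $x_1\cdots x_{2n}$; the antisymmetry relations of the $b_{ij}$ are exactly what is needed to collapse the sum over all pairings into the sum over $\Pi$ with the correct $q$-inversion weights.

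Once the power formula is in hand, the Laplace expansion follows by splitting the index set. **First I would** fix $0\le t\le n$ and write $\omega=\omega'+\omega''+(\text{cross terms})$, where $\omega'$ collects the terms $b_{ij}x_ix_j$ with both indices in a running subset and $\omega''$ the complementary block; more precisely, I would expand $\omega^n$ by the multinomial-type formula and isolate the contributions in which exactly $2t$ of the $x$-variables carry indices from a $2t$-element subset $I$ and the remaining $2(n-t)$ carry indices from $I^c$. Each such contribution factors as a product of a degree-$t$ piece built from $B_I$ and a degree-$(n-t)$ piece built from $B_{I^c}$, precisely because $x_ix_j$ for $i\in I$ and the block for $I^c$ live in disjoint variable sets up to the commutation sign. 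Collecting these gives
\begin{equation*}
\omega^n=[n]_u!\sum_I \mathrm{inv}(I,I^c)\,\Pf_q(B_I)\,\Pf_q(B_{I^c})\,x_1\cdots x_{2n},
\end{equation*}
and comparing with the power formula yields the claimed identity after cancelling $[n]_u!$.

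**The hard part will be** pinning down the sign and $q$-inversion factor $\mathrm{inv}(I,I^c)=\prod_{i\in I,\,j\in I^c,\,i>j}(-q_{ji})$ that appears when one moves the $x$-variables indexed by $I$ past those indexed by $I^c$ to restore the monomial to standard order. This is the genuinely combinatorial step: each transposition in the reordering contributes a factor $-q_{ji}$ according to the wedge relation $x_jx_i=-q_{ij}x_ix_j$, and one must check that the total accumulated factor is exactly the product over inversions between $I$ and $I^c$, matching the definition of $\mathrm{inv}$. I expect the rest of the argument — the induction for the power formula and the multinomial bookkeeping — to be routine once the conventions for $(-q)_\sigma$ and the exterior relations \eqref{wedge1}--\eqref{wedge4} are set up carefully. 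As an alternative to the exterior-algebra route, one could instead argue directly from the defining sum by partitioning $\Pi$ according to which pairs are internal to $I$ versus $I^c$, but the exterior-algebra approach makes the sign bookkeeping far more transparent and parallels the determinant case already established in the paper.
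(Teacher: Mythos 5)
Your strategy is the paper's own: realize $\Pf_q(B)$ as the scaling constant of the power of the quantum two-form $\Omega=\sum_{i<j}b_{ij}x_ix_j$ in $\Lambda_q(X)$, split the power as $\Omega^n=\Omega^t\,\Omega^{n-t}$, and extract $\mathrm{inv}(I,I^c)$ from reordering $x_Ix_{I^c}$ into $x_1\cdots x_{2n}$. However, your normalization of the power formula is wrong, and wrong in a way that would sink the proof if carried through consistently. With the paper's definition of $\Pi$ --- only $\sigma(2i-1)<\sigma(2i)$ is imposed, with \emph{no} ordering between the $n$ blocks, and the $b_{ij}$ noncommuting so that different block orders give genuinely different terms --- each nonzero monomial $b_{i_1j_1}\cdots b_{i_nj_n}\,x_{i_1}x_{j_1}\cdots x_{i_n}x_{j_n}$ in the expansion of $\Omega^n$ corresponds bijectively to a single $\sigma\in\Pi$, and reordering the $x$'s via \eqref{wedge1} produces exactly the factor $(-q)_\sigma$ of \eqref{e:qinv}; hence $\Omega^n=\Pf_q(B)\,x_1\cdots x_{2n}$ with no quantum factorial at all. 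Your $[n]_u!$ would be appropriate only for the matching-style normalization in which $\Pi$ additionally orders the blocks ($\sigma(1)<\sigma(3)<\cdots$), which is not the definition here (and even then the base would be $u^2$, as in the determinant computation, not $u$). The inconsistency surfaces inside your own argument: applying your power formula to the two factors gives $\Omega^t\Omega^{n-t}=[t]_u!\,[n-t]_u!\sum_I\mathrm{inv}(I,I^c)\Pf_q(B_I)\Pf_q(B_{I^c})\,x_1\cdots x_{2n}$, not $[n]_u!\sum_I(\cdots)$, so the cancellation you invoke is unavailable and you would be left with a spurious $q$-binomial coefficient contradicting the proposition as stated. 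Deleting the factorials everywhere repairs this, and your argument then collapses to the paper's proof.

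Two smaller points. Your claim that the $p$-antisymmetry $b_{ji}=-p_{ij}b_{ij}$ is ``exactly what is needed to collapse the sum over all pairings'' is mistaken: both $\Omega$ and $\Pf_q(B)$ use only the entries with $i<j$, so no collapsing occurs and the antisymmetry plays no role in this proposition (it matters later, when one must check that $C=T^tBT$ is again $p$-antisymmetric). Likewise the decomposition $\omega=\omega'+\omega''+(\text{cross terms})$ with multinomial bookkeeping is unnecessary machinery: the identity $\Omega^t=\sum_{|I|=2t}\Pf_q(B_I)\,x_I$ holds verbatim by the same bijective expansion, and the observation that $x_Ix_J=0$ unless $J=I^c$ is all the remaining bookkeeping the proof requires.
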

\begin{proof}
Let $\Omega=\sum_{i<j}b_{ij}x_{i}x_{j}$, where $x_i\in\Lambda_q(x)$. Then
\begin{equation}
\Omega^n=\Pf_q(B)x_{1} x_2\cdots  x_{2n}.
\end{equation}

On the other hand,
\begin{equation}
\begin{aligned}
\Omega^n&=\Omega^t \Omega^{n-t}\\
&=\sum_{I,J}\Pf(B_{I})x_I\Pf(B_{I^c})x_{J}\\
&=\sum_{I,J}\Pf(B_{I})\Pf(B_{I^c})x_Ix_{J}.
\end{aligned}
\end{equation}
It is easy to see that $x_Ix_{J}$ vanishes unless $J=I^c$. Therefore
\begin{align*}
& \Omega^n
=\sum_{I}\Pf(B_{I})\Pf(B_{I^c})x_Ix_{I^c}\\
&=\sum_{I}inv(I,I^c)\Pf(B_{I})\Pf(B_{I^c})x_{1}x_2\cdots x_{2n}.
\end{align*}
Thus we conclude that
$$\Pf(B)=\sum_{I}inv(I,I^c)\Pf(B_{I})\Pf(B_{I^c}).$$
\end{proof}

\begin{theorem}
Let $B=(b_{ij})_{1\leq i,j \leq 2n}$ be the $p$-antisymmetric matrix such that $b_{ji}=-p_{ij}b_{ij}, i<j$, and assume that
 the entries of $B$ commute with those of a $(p, u)$-matrix $T=(t_{ij})_{1\leq i,j\leq 2n}$. Let $C=T^{t}BT$. Then
\begin{equation}
c_{ji}=-p_{ij}c_{ij}, \quad i<j¡£
\end{equation}
and
\begin{equation}
\Pf_{q}(C)={\det}_q(T)\Pf_{q}(B).
\end{equation}
\end{theorem}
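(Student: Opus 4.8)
The plan is to imitate the exterior-algebra approach that was used to prove $\rdet(T)=\cdet(T)$ and the Laplace-type expansions earlier in the paper, rather than to manipulate the permutation sums directly. The key observation is that $\Pf_q(B)$ has been characterized in Proposition \ref{Pfaffian-Lap} (and in its proof) via the quadratic element $\Omega=\sum_{i<j}b_{ij}x_ix_j$ in the $q$-exterior algebra $\Lambda_q(x)$, through the relation $\Omega^n=\Pf_q(B)\,x_1x_2\cdots x_{2n}$. So first I would introduce the transformed variables. Set $\delta_i=\sum_k t_{ik}x_k$ (the same $\delta$ used in the determinant section), so that the relations \eqref{wedge1}--\eqref{wedge4} give $\delta_{\sigma_1}\cdots\delta_{\sigma_{2n}}=(-q)_\sigma\,\delta_1\cdots\delta_{2n}$ and, in particular, $\delta_1\cdots\delta_{2n}=\det_q(T)\,x_1\cdots x_{2n}$ by \eqref{e:minor1} with $t=2n$.

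Next I would verify the $p$-antisymmetry of $C=T^tBT$, i.e. $c_{ji}=-p_{ij}c_{ij}$ for $i<j$; this is the purely algebraic half of the statement. Writing $c_{ij}=\sum_{k,l}t_{ki}b_{kl}t_{lj}$ and using that the $b_{kl}$ commute with the $t$'s together with the $p$-antisymmetry $b_{lk}=-p_{kl}b_{kl}$ and the commutation relations \eqref{relation1}--\eqref{relation4} among the $t_{ij}$, one collects terms in pairs $(k,l)$ and $(l,k)$ and checks that the required scalar $-p_{ij}$ emerges. This is a routine but parameter-sensitive computation, and I would expect it to pin down exactly why the hypothesis $b_{ji}=-p_{ij}b_{ij}$ (rather than some other antisymmetry convention) is the right one to make $C$ again $p$-antisymmetric.

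For the Pfaffian identity itself, the strategy is to compute the quadratic element attached to $C$ in two ways. Consider $\Omega_C=\sum_{i<j}c_{ij}\,x_ix_j$; on one hand $\Omega_C^{\,n}=\Pf_q(C)\,x_1\cdots x_{2n}$ by the defining property. On the other hand I would show that $\Omega_C=\sum_{k<l}b_{kl}\,\delta_k\delta_l$, i.e. that substituting the transformed one-forms $\delta$ into the original quadratic form $\Omega_B=\sum_{k<l}b_{kl}x_kx_l$ reproduces $\Omega_C$. This reduces to re-expanding $\sum_{i<j}\big(\sum_{k,l}t_{ki}b_{kl}t_{lj}\big)x_ix_j$, reordering using $x_jx_i=-q_{ij}x_ix_j$, and recognizing $\delta_k\delta_l=\sum_{i,j}t_{ki}t_{lj}x_ix_j$; the commutativity of the $b$'s with the $t$'s is what lets the scalars pass through. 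Granting this, raising to the $n$-th power gives $\Omega_C^{\,n}=\big(\sum_{k<l}b_{kl}\delta_k\delta_l\big)^n=\Pf_q(B)\,\delta_1\cdots\delta_{2n}=\Pf_q(B)\,\det_q(T)\,x_1\cdots x_{2n}$, where the middle equality is the Proposition \ref{Pfaffian-Lap} mechanism applied in the $\delta$-variables (valid because the $\delta$'s satisfy the same $q$-exterior relations as the $x$'s) and the last equality is $\delta_1\cdots\delta_{2n}=\det_q(T)\,x_1\cdots x_{2n}$. Comparing the two expressions for $\Omega_C^{\,n}$ and cancelling the top form $x_1\cdots x_{2n}$ yields $\Pf_q(C)=\Pf_q(B)\det_q(T)=\det_q(T)\Pf_q(B)$, the last step needing that $\det_q(T)$ commutes with $\Pf_q(B)$ (which follows since the $b$'s commute with the $t$'s).

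The main obstacle I anticipate is the bookkeeping in the identity $\Omega_C=\sum_{k<l}b_{kl}\delta_k\delta_l$: one must handle the diagonal and the $k>l$ contributions correctly, using $b_{lk}=-p_{kl}b_{kl}$ to fold the $k>l$ terms back onto $k<l$, and confirm that the $q$-reordering of the $x$-monomials inside $\delta_k\delta_l$ matches the $q$-weights hidden in $c_{ij}$. Getting the scalar factors $p_{ij}$ and $q_{ij}$ to align here is precisely where the $(p,u)$-condition and the defining relations of the $(p,u)$-matrix must be invoked, and I would treat this verification as the technical heart of the proof.
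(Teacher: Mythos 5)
Your proposal is correct and follows essentially the same route as the paper's own proof: a direct check of the $p$-antisymmetry of $C$, followed by computing the square of the quadratic form two ways in the $q$-exterior algebra via $\delta_i=\sum_j t_{ij}x_j$ (the paper's $\omega_i$), whose relations yield $\delta_1\cdots\delta_{2n}=\det_q(T)\,x_1\cdots x_{2n}$. The only cosmetic difference is that the paper works with the full double sum $\Omega=X^t\dot\ot C\dot\ot X=(1+u^2)\sum_{i<j}c_{ij}x_ix_j$ and carries the factor $(1+u^2)^n$ through both sides, whereas you fold the $k>l$ terms at the quadratic level and cancel $(1+u^2)$ there, which is legitimate precisely because the standing hypothesis \eqref{e:cond-p} gives $p_{ij}q_{ij}=u^2\neq-1$.
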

\begin{proof} We first check that $c_{ij}$ also form anti-symmetric matrix. We compute that
\begin{align*}
c_{ii}&=\sum_{k,l}t_{ki}b_{kl}t_{li}=\sum_{k<l}t_{ki}b_{kl}t_{li}+t_{li}b_{lk}t_{ki}\\
&=\sum_{k<l}(t_{ki}t_{li}-p_{kl}t_{li}t_{ki})b_{kl}=0.\\
\end{align*}
For $i<j$,
\begin{align*}
c_{ij}&=\sum_{k,l}t_{ki}b_{kl}t_{lj}
=\sum_{k<l}\left(t_{ki}b_{kl}t_{lj}+t_{li}b_{lk}t_{kj}\right)\\
&=\sum_{k<l}(t_{ki}t_{lj}-p_{kl}t_{li}t_{kj})b_{kl}
=\sum_{k<l}{\det}_q(T^{kl}_{ij})b_{kl},\\
\end{align*}
\begin{align*}
c_{ji}&=\sum_{k,l}t_{kj}b_{kl}t_{li}=\sum_{k<l}(t_{kj}t_{li}-p_{kl}t_{lj}t_{ki})b_{kl}\\
&=\sum_{k<l}-p_{ij}(t_{ki}t_{lj}-p_{kl}t_{li}t_{kj})b_{kl}
=-p_{ij}\sum_{k<l}{\det}_q(T^{kl}_{ij})b_{kl}\\
&=-p_{ij}c_{ij}.\\
\end{align*}

Consider the element
$$\Omega=X^{t}\dot{\ot} C \dot{\ot} X,$$
where we recall that $X=(x_1,\ldots,x_n)^t$ and $x_i\in \Lambda_q(x)$. Explicitly we have that
$\Omega=\sum_{1\leq i,j\leq n} c_{ij}x_{i}x_{j}=\sum_{i<j}(1+u^2) c_{ij}x_{i}x_{j}$, therefore
\begin{equation}\label{1}
 \Omega^n=(1+u^2)^{n}\Pf_q(C)x_{1}x_2\cdots x_{2n}.
\end{equation}

On the other hand, let $\omega_i=\sum_{j=1}^n t_{ij}\otimes x_j$.
Then
\begin{align*}
\omega_j\omega_i&=-q_{ij}\omega_i\omega_j,\quad i<j, \\
\omega_i\omega_i&=0.
\end{align*}
As $T \dot{\ot} X=(\omega_1,\ldots,\omega_{2n})^{t}$, one has that
\begin{align*}
\Omega&=X^{t} \dot{\ot} T^{t} B T \dot{\ot} X=(T \dot{\ot} X )^{t}B(T \dot{\ot} X )\\
&=\sum_{1\leq i,j\leq n}b_{ij}\omega_{i}\omega_{j}\\
&=\sum_{i<j}(1+u^2)b_{ij}\omega_{i}\omega_{j}.
\end{align*}
Therefore
\begin{equation}\label{2}
\begin{aligned}
& \Omega^n=(1+u^2)^{n}\Pf_q(B)\omega_1 \omega_2 \cdots \omega_{2n}\\
&=(1+u^2)^{n}\Pf_q(B){\det}_q(T)x_{1}x_2\cdots   x_{2n}
\end{aligned}
\end{equation}
Subsequently we have proved that
$$\Pf_{q}(C)={\det}_q(T)\Pf_{q}(B).$$
\end{proof}

The following column analog is clear.
\begin{remark}
Let $B$ be any matrix with entries $b_{ij},1\leq i,j \leq 2n$  commuting with $a_{ij}$ and $b_{ji}=-q_{ij}b_{ij},i<j$.
Let $C=TBT^{t}$. Then $c_{ji}=-q_{ij}c_{ij}, i<j$ and $\Pf_p(C)={\det}_q(T)\Pf_p(B)$.
\end{remark}

Let $b_{ji}=-p_{ij}b_{ij},i<j$, $b_{ji}'=-q_{ij}b_{ij}',i<j$. We define the quadratic elements $z_{ij}^{l}$ and $z_{ij}^{r}$, $1\leq i,j\leq n$ as follows:
\begin{equation}
z_{ij}^l=\sum_{k<l}b_{kl}  \xi^{kl}_{ij}, \ \
z_{ij}^r=\sum_{k<l}b_{kl}' \xi_{kl}^{ij}.
\end{equation}

Let $\mathcal{A}^l$(resp. $\mathcal{A}^r$ ) be the subalgebra generated by $z_{ij}^l$ ($z_{ij}^r$).
Since
$
\Delta(z_{ij}^l)=\sum_{s<t} z_{st}^l \ot  \xi^{st}_{ij}, \ \
\Delta(z_{ij}^r)=\sum_{s<t} \xi^{ij}_{st}  \ot  z_{st}^r,
$
$\mathcal{A}^l$(resp. $\mathcal{A}^r$ ) is left(resp. right) submodule of $ \GL_{p,u}(n) $ .
It is clear that $\Pf_{q}(z^l)$ (resp. $\Pf_{p}(z^r)$)  is annihilated by
all $l_{kl}^{\pm}$, $k\neq l$.
\begin{align}
&l_{k,k}^{+}.\Pf_{q}(z^l)=u^{2-n} \prod_{j<k} q_{jk} \prod_{j>k} p_{kj}  \Pf_{q}(z^l) ,\\
&l_{k,k}^{-}.\Pf_{q}(z^l)=u^{n-2} \prod_{j<k} p_{jk}^{-1} \prod_{j>k} q_{kj}^{-1}  \Pf_{q}(z^l),\\
&\Pf_{p}(z^r).l_{k,k}^{+}=u^{2-n} \prod_{j<k} q_{jk} \prod_{j>k} p_{kj}  \Pf_{p}(z^r)  ,\\
&\Pf_{p}(z^r).l_{k,k}^{-}=u^{n-2} \prod_{j<k} p_{jk}^{-1} \prod_{j>k} q_{kj}^{-1}  \Pf_{p}(z^r) .
\end{align}

\section{Multiparameter quantum hyper-Pfaffians}
We now generalize the notion of the quantum multiparameter Pfaffian to the quantum hyper-Pfaffian.
A hypermatrix $A=(A_{i_1\cdots i_n})$ is an array of entries indexed by several indices, while a matrix is
indexed by two indices.
\begin{definition}
Let $B$ be a hypermatrix with noncommutative entries $b_{i_1\cdots i_{m}},1\leq i_k\leq mn,k=1,\ldots, m.$
Multiparameter quantum hyper-Pfaffian is defined by
\begin{align*}
\Pf_q(B)
=\sum_{\sigma\in \Pi}(-q)_{\sigma}b_{\sigma(1)\cdots\sigma(m)}\cdots b_{\sigma(m(n-1)+1)\cdots\sigma(mn)},
\end{align*}
where $\Pi$ is the set of permutations $\sigma$ of $mn$ such that
$\sigma((k-1)m+1)<\sigma((k-1)m+2)<\sigma(km), k=1,\ldots,n.$
\end{definition}
Note that the multiparameter Pfaffian uses only the entries $b_{i_1\cdots i_m}$, where
$i_1<\cdots <i_m$.

Similar to Proposition {\ref{Pfaffian-Lap}}, one has the following result.
\begin{proposition}
For any $0\leq t\leq n$,
\begin{equation}
\Pf(B)=\sum_{I}inv(I,I^c)\Pf(B_{I})\Pf(B_{I^c}),
\end{equation}
where $I$ runs through subsets of $[1,mn]$ such that $|I|=mt$.
\end{proposition}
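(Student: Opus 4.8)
The plan is to mimic the proof of Proposition~\ref{Pfaffian-Lap}, replacing the quadratic generating form $\Omega$ of the quantum exterior algebra by an appropriate $m$-linear generating form built in the quantum $m$-exterior algebra, so that the hyper-Pfaffian appears as the top-degree coefficient. First I would introduce the generating element
\begin{equation*}
\Omega=\sum_{i_1<\cdots<i_m}b_{i_1\cdots i_m}\,x_{i_1}\cdots x_{i_m},
\end{equation*}
where the $x_i$ live in the quantum exterior algebra $\Lambda_q(x)$ of the preceding sections (with $x_jx_i=-q_{ij}x_ix_j$ for $i<j$ and $x_i^2=0$). The key structural fact I need is that the $n$-th power of $\Omega$ collapses onto the full product $x_1x_2\cdots x_{mn}$ with coefficient exactly $\Pf_q(B)$; that is,
\begin{equation*}
\Omega^n=\Pf_q(B)\,x_1x_2\cdots x_{mn}.
\end{equation*}
This is the direct $m$-index analog of the identity $\Omega^n=\Pf_q(B)x_1\cdots x_{2n}$ used in the $m=2$ case, and it follows because multiplying out $\Omega^n$ selects precisely the terms where the $mn$ indices are all distinct (all repeated-index monomials vanish since $x_i^2=0$), each such term being a product of $n$ blocks $b_{i_1\cdots i_m}$ with $i_1<\cdots<i_m$, reordered to $x_1\cdots x_{mn}$ at the cost of the $q$-sign $(-q)_\sigma$ encoded in the definition of $\Pf_q$.

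Next I would split the exponent as $\Omega^n=\Omega^{t}\Omega^{n-t}$ and expand each factor by the same top-power formula applied to the index subsets. Writing $x_I=\prod_{i\in I}x_i$ for an ordered subset $I$, the factor $\Omega^t$ produces a sum $\sum_{|I|=mt}\Pf_q(B_I)\,x_I$ and $\Omega^{n-t}$ produces $\sum_{|J|=m(n-t)}\Pf_q(B_J)\,x_J$, where $B_I$ denotes the sub-hypermatrix on the index block $I$. Since the hyper-Pfaffian $\Pf_q(B_I)$ is a scalar-combination of central-type blocks commuting with the $x$'s (the entries $b_{i_1\cdots i_m}$ play the role of commuting coefficients), the coefficients pass through the $x_I$ freely, giving
\begin{equation*}
\Omega^n=\sum_{I,J}\Pf_q(B_I)\Pf_q(B_J)\,x_Ix_J.
\end{equation*}
As in the quadratic case, $x_Ix_J$ vanishes unless $I$ and $J$ are disjoint and exhaust $[1,mn]$, i.e. $J=I^c$; and when $J=I^c$ the reordering of $x_Ix_{I^c}$ into $x_1\cdots x_{mn}$ contributes exactly the sign $inv(I,I^c)=\prod_{i\in I,\,j\in I^c,\,i>j}(-q_{ji})$.

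Comparing the two computations of $\Omega^n$ and cancelling the common factor $x_1\cdots x_{mn}$ yields
\begin{equation*}
\Pf_q(B)=\sum_{|I|=mt}inv(I,I^c)\,\Pf_q(B_I)\Pf_q(B_{I^c}),
\end{equation*}
which is the claim. The main obstacle I anticipate is verifying carefully the top-power identity $\Omega^n=\Pf_q(B)\,x_1\cdots x_{mn}$ in the hypermatrix setting: unlike the $m=2$ case, each factor of $\Omega$ is itself a degree-$m$ monomial in the $x_i$, so I must track the $q$-commutation signs that arise when reordering the $n$ blocks of $m$ indices each into the increasing arrangement, and confirm that the resulting sign on the term indexed by $\sigma$ is precisely $(-q)_\sigma$ with $\sigma$ ranging over the shuffle set $\Pi$ from the definition. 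Once that sign bookkeeping is matched to the definition of $\Pf_q$, the splitting argument is formally identical to Proposition~\ref{Pfaffian-Lap} and the remaining steps are routine.
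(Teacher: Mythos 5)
Your proposal is correct and follows essentially the same route as the paper: the paper's proof also introduces $\Omega=\sum_{i_1<\cdots<i_m}b_{i_1\cdots i_m}x_{i_1}\cdots x_{i_m}$ in $\Lambda_q(x)$, computes $\Omega^n=\Pf_q(B)\,x_1\cdots x_{mn}$, splits $\Omega^n=\Omega^t\Omega^{n-t}$, and compares the two expansions using the vanishing of $x_Ix_J$ unless $J=I^c$ together with the reordering factor $inv(I,I^c)$. The sign-bookkeeping point you flag at the end is exactly what the paper leaves implicit, and your treatment of it (matching the block-reordering $q$-signs to $(-q)_\sigma$ over the shuffle set $\Pi$) is the right verification.
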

\begin{proof}

Let $\Omega=\sum_{i_1<\cdots <i_{m}}b_{i_1\cdots i_{m}}x_{i_1}x_{i_2}\cdots  x_{i_m}$,
then one has that
\begin{equation}\label{11}
 \Omega^n=\Pf_q(B)x_{1}x_2\cdots  x_{2n}.
\end{equation}
and
\begin{equation}\label{22}
 \Omega^n
=\Omega^t \Omega^{n-t}
=\sum_{I,J}\Pf(B_{I})\Pf(B_{I^c})x_Ix_{J},
\end{equation}
where as usual we have put $x_I=x_{i_1}x_{i_2}\cdots  x_{i_m}$.
Comparing \eqref{11} and \eqref{22}, one has the statement.
\end{proof}

\begin{theorem}
Let $B=(b_{i_1\cdots i_m })_{1\leq i_1,\ldots i_m \leq mn}$ be any hypermatrix with noncommutative entries commuting  
with those of the matrix $T=(t_{ij})$. Let
$$c_{I}=\sum_{J}{\det}_q(T^J_I) b_J,$$
then $\Pf_{q}(C)={\det}_q(T)\Pf_{q}(B)$.
\end{theorem}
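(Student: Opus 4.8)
The plan is to follow the same exterior-algebra strategy that proved the matrix case (the preceding theorem with $m=2$), now using the $q$-exterior algebra $\Lambda_q(x)$ in which $x_jx_i=-q_{ij}x_ix_j$ and $x_i^2=0$. First I would introduce the master element
\begin{equation*}
\Omega=\sum_{i_1<\cdots<i_m}b_{i_1\cdots i_m}\,x_{i_1}x_{i_2}\cdots x_{i_m},
\end{equation*}
whose $n$-th power collapses, by the hyper-Pfaffian definition and the monomial relations in $\Lambda_q(x)$, to $\Omega^n=(\text{const})\,\Pf_q(B)\,x_1x_2\cdots x_{mn}$, exactly as in the preceding proposition. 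The aim is to compute this same $\Omega^n$ a second way after substituting the transformed variables, and then read off the identity by comparing the coefficients of the top form $x_1\cdots x_{mn}$.

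Next I would perform the change of variables. Setting $\omega_i=\sum_j t_{ij}\otimes x_j=\delta_i$, the relations \eqref{wedge1}--\eqref{wedge4} force the $\omega_i$ to obey $\omega_j\omega_i=-q_{ij}\omega_i\omega_j$ and $\omega_i^2=0$, i.e. they satisfy the \emph{same} $\Lambda_q$-relations as the $x_i$; this is the content of \eqref{e:minor1}. The key computation is then to evaluate the degree-$m$ part
\begin{equation*}
\sum_{i_1<\cdots<i_m} b_{i_1\cdots i_m}\,\omega_{i_1}\cdots\omega_{i_m}
=\sum_{i_1<\cdots<i_m} b_{i_1\cdots i_m}\!\!\sum_{I}{\det}_q(T^{\,I}_{i_1\cdots i_m})\,x_I,
\end{equation*}
where I have used the hyper-analog of \eqref{e:minor1} expressing $\omega_{i_1}\cdots\omega_{i_m}$ in the ordered $x$-monomials with quantum minor coefficients. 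Interchanging the order of summation and using the definition $c_I=\sum_J{\det}_q(T^J_I)\,b_J$ (matching top and bottom index conventions) shows that this degree-$m$ element equals $\sum_{I}c_I\,x_I$, which is precisely the master element $\Omega$ built from the hypermatrix $C$. Raising to the $n$-th power gives, on one hand, $(\text{const})\,\Pf_q(C)\,x_1\cdots x_{mn}$, and on the other hand, by factoring $\Omega^n$ through the $\omega$'s exactly as in \eqref{2}, $(\text{const})\,\Pf_q(B)\,\omega_1\cdots\omega_{mn}=(\text{const})\,\Pf_q(B)\,{\det}_q(T)\,x_1\cdots x_{mn}$.

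The main obstacle I anticipate is the bookkeeping of the parameter constants and the verification that the transformed array $C$ is again of the correct $q$-antisymmetry type, so that its hyper-Pfaffian is well defined and the top-form coefficient in $\Omega^n$ factors cleanly. In the matrix case ($m=2$) this was the step checking $c_{ji}=-p_{ij}c_{ij}$ and $c_{ii}=0$; for general $m$ the corresponding symmetry is governed by how ${\det}_q(T^J_I)$ transforms under permuting the entries of $I$, and one must confirm that the vanishing relations $x_{i}^2=0$ (and their hyper-consequences) kill every term of $\Omega^n$ except the multiple of $x_1\cdots x_{mn}$, so that no spurious cross-terms survive the comparison. Once the $\omega_i$ are known to satisfy the same $\Lambda_q$-relations as the $x_i$ and the coefficient extraction $\omega_{i_1}\cdots\omega_{i_m}\mapsto\sum_I{\det}_q(T^I_{i_1\cdots i_m})x_I$ is justified, the two evaluations of $\Omega^n$ must agree, and cancelling the common nonzero constant and the common top form yields $\Pf_q(C)={\det}_q(T)\,\Pf_q(B)$.
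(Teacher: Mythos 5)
Your proposal is correct and is essentially the paper's own proof: you form the master element $\Omega=\sum_I c_I x_I=\sum_J b_J\delta_J$ in $\Lambda_q(x)$ and evaluate $\Omega^n$ two ways, once collapsing to $\Pf_q(C)\,x_1\cdots x_{mn}$ and once, using that the $\delta_i$ satisfy the same relations as the $x_i$, to $\Pf_q(B)\,\delta_1\cdots\delta_{mn}=\Pf_q(B)\,{\det}_q(T)\,x_1\cdots x_{mn}$. The only blemishes are a transposed index in your displayed minor expansion, which by \eqref{e:minor1} should read $\omega_{i_1}\cdots\omega_{i_m}=\sum_I{\det}_q(T^{i_1\cdots i_m}_{I})\,x_I$ (the $\omega$-indices upstairs, consistent with $c_I=\sum_J{\det}_q(T^J_I)\,b_J$), and the anticipated need to verify $q$-antisymmetry of $C$, which is moot here since the hyper-Pfaffian uses only entries with strictly increasing indices and the theorem assumes no antisymmetry of $B$.
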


\begin{proof}
Let $\delta_i=\sum_{j=1}^{mn}t_{ij}x_{j}$, and consider the element $\Omega=\sum c_{I}x_{I}$. It is clear that
\begin{equation}\label{hyperpf1}\Omega^n=\Pf_{q}(C)x_1x_2\cdots x_{2n}.
\end{equation}

On the other hand,
$\Omega=\sum b_{J}\delta_{J}$. Then
\begin{equation}\label{hyperpf2}
\Omega^n=\Pf(B)\delta_1\delta_2\cdots \delta_{2n}=\Pf(B){\det}_q(T)x_1x_2\cdots  x_{2n}.
\end{equation}

Comparing (\ref{hyperpf1}) and (\ref{hyperpf2}) we conclude that
$$\Pf_{q}(C)={\det}_q(T)\Pf_{q}(B).$$
\end{proof}

\begin{remark}
The column-analog is also true. In fact, one has the following result.
Let $B=(b_{i_1\cdots i_m })_{1\leq i_1,\ldots i_m \leq mn}$ be any hypermatrix with noncommutative entries commuting
with those of the matrix $T=(t_{ij})$. Let
$$c_{I}=\sum_{J}{\det}_q(T^I_J) b_J,$$
then $\Pf_{p}(C)={\det}_q(T)\Pf_{p}(B)$.

\end{remark}

\bigskip
\centerline{\bf Acknowledgments}
\medskip
The work is supported by National Natural Science Foundation of China (
11531004), Fapesp (2015/05927-0), Humboldt Foundation and Simons Foundation 523868.
Jing acknowledges the support of
Max-Planck Institute for Mathematics in the Sciences, Leipzig.
Both authors also thank South China University of Technology for support during the work.

\bibliographystyle{amsalpha}

\end{document}